\DeclareMathAlphabet{\mathpzc}{OT1}{pzc}{m}{it}
\newtheorem{theorem}{Theorem}[section]
\newtheorem{corollary}[theorem]{Corollary}
\newtheorem{lemma}[theorem]{Lemma}
\newtheorem{proposition}[theorem]{Proposition}
\theoremstyle{definition}
\newtheorem{remark}[theorem]{Remark}
\newcommand{\epsi}{\varepsilon}
\newcommand{\vecf}{\mbox{\boldmath $ f $}}
\newcommand{\vecu}{\mbox{\boldmath $ u $}}
\newcommand{\vecv}{\mbox{\boldmath $ v $}}
\newcommand{\vecphi}{\mbox{\boldmath $ \phi $}}
\newcommand{\vecpsi}{\mbox{\boldmath $ \psi $}}
\newcommand{\vectau}{\mbox{\boldmath $ \tau $}}
\newcommand{\veckappa}{\mbox{\boldmath $ \kappa $}}
\newcommand{\scrD}{\mathscr{D}}
\newcommand{\scrM}{\mathscr{M}}
\newcommand{\scrG}{\mathscr{G}}
\newcommand{\scrH}{\mathscr{H}}
\newcommand{\scrF}{\mathscr{F}}
\newcommand{\scrN}{\mathscr{N}}
\newcommand{\scrK}{\mathscr{K}}
\newcommand{\scrP}{\mathscr{P}}
\newcommand{\scrQ}{\mathscr{Q}}
\newcommand{\scrR}{\mathscr{R}}
\newcommand{\scrS}{\mathscr{S}}
\newcommand{\calE}{\mathcal{E}}
\newcommand{\calL}{\mathcal{L}}
\newcommand{\calO}{\mathcal{O}}
\newcommand{\R}{\mathbb{R}}
\newcommand{\Z}{\mathbb{Z}}
\newcommand{\N}{\mathbb{N}}
\newcommand{\vphi}{\varphi}
\title{Variational formulae and estimates of O'Hara's knot energies}
\author{Shoya Kawakami\\Takeyuki Nagasawa}
\date{\today}
\begin{document}

\maketitle

%
%

\begin{abstract}
O'Hara's energies,
introduced by Jun O'Hara,
were proposed to answer the question of what is the canonical shape in a given knot type,
and were configured so that the less the energy value of a knot is,
the ``better" its shape is.
The existence and regularity of minimizers has been well studied.
In this article,
we calculate the first and second variational formulae of the
$(\alpha,p)$-O'Hara energies and show absolute integrability,
uniform boundedness,
and continuity properties.
Although several authors have already considered the variational formulae of the
$(\alpha,1)$-O'Hara energies,
their techniques do not seem to be applicable to the case
$p>1$.
We obtain the variational formulae in a novel manner by extracting a certain function from the energy density.
All of the
$(\alpha,p)$-energies are made from this function,
and by analyzing it,
we obtain not only the variational formulae but also estimates in several function spaces.
\end{abstract}

%
%

\section{Introduction}

In his papers \cite{O91,O92},
O'Hara proposed several energies for knots to determine the {\it canonical shape} of  a knot in a given knot class.
In order to describe these energies,
let
$\vecf : \R/\calL\Z \ni s \mapsto \vecf(s)$
be an arclength parametrization of a knot,
or more generally,
of a closed curve in
$\R^n$
without self-intersections.
For positive constants
$\alpha$
and
$p$,
the O'Hara
$(\alpha , p)$-energy is defined as
\[
	\calE_{(\alpha,p)}(\vecf)
	=
	\iint_{(\R/\calL\Z)^2}
	\scrM_{(\alpha,p)}(\vecf)(s_1,s_2) ds_1ds_2,
\]
where
\[
	\scrM_{(\alpha,p)}(\vecf)(s_1,s_2)
	=
	\left(
	\frac 1 { \| \vecf(s_1)-\vecf(s_2) \|_{\R^n}^\alpha } - \frac 1 { \scrD(\vecf(s_1) , \vecf(s_2))^\alpha }
	\right)^p .
\]
Here,
$\| \vecf(s_1)-\vecf(s_2) \|_{\R^n}$
and
$\scrD(\vecf(s_1),\vecf(s_2))$
are the extrinsic and the intrinsic distances between two points
$\vecf(s_1)$
and
$\vecf(s_2)$
on the curve,
respectively.

Amongst these energies,
the case
$p=1$
has been well-studied by many authors and here we describe several important contributions in this direction.
For example,
the first variational formula was derived in
\cite{FHW94,Rei12},
and the existence of a minimizer in a given knot class was studied in
\cite{FHW94,O94}.
In
\cite{ACFGH03},
it was shown that the right circle is the only global minimizer of the energy under length-constraint for appropriate
$\alpha$.
We also mention that there are several recent works concerning the gradient flow
\cite{B12-2,B18}
and the regularity of critical points
\cite{BRS16,BR13,BV19,FHW94,He00,Rei12}
of the
$(\alpha,1)$-energies.
The obvious difficulty in the analysis is that the energy density has a singularity on the diagonal set
$\{ (s,s) \,|\, s \in \R/\calL\Z \}$.
To avoid this,
the variational formulae were first derived using Cauchy's principal value \cite{FHW94,Rei12}.
The absolute integrability of the first and second variational formulae was shown by Ishizeki-Nagasawa \cite{IN15}
in the energy class.
They used the decomposition of the
$(2,1)$-energy
shown in
\cite{IN14},
which gives an expression of the energy density without using the intrinsic distance.
Furthermore, in
\cite{IN15},
they derived other estimates of variational formulae in several function spaces and a similar technique is applicable to other
$(\alpha,1)$-energies;
see \cite{INpre}.

By comparison,
the case
$p>1$
is rather less well studied.
Even the explicit expression of the first variational formula in the sense of Cauchy's principle value seems not to have been given.
In this paper,
the first and second variational formulae of
$(\alpha,p)$-energies will be given,
and several estimates will be shown:
absolute integrability,
uniform boundedness,
and continuity.
We,
however,
do not have a decomposition like the
$(\alpha,1)$
case,
and therefore the technique in \cite{IN15,INpre} cannot be used.
Instead,
we pay attention to the function
\[
	\scrN ( \vecu , \vecv ) =
	\frac 1 { 2 \| \vecf ( s_1 ) - \vecf ( s_2 ) \|_{ \R^n }^2 }
	\int_{ s_1 }^{ s_2 }
	\int_{ s_1 }^{ s_2 }
	( \vecu ( s_3 ) - \vecu ( s_4 ) )
	\cdot
	( \vecv ( s_3 ) - \vecv ( s_4 ) )
	ds_3 ds_4.
\]
By use of
$\scrN$,
the
$(\alpha,p)$-energy may be written as
\[
	\calE_{ ( \alpha , p ) } ( \vecf )
	=
	\iint_{ ( \R / \calL \Z )^2 }
	\left(
	\frac { \vphi_\alpha \left( \scrN ( \vecf^\prime , \vecf^\prime ) \right) }
	{ \| \vecf ( s_1 ) - \vecf ( s_2 ) \|_{ \R^n }^\alpha }
	\right)^p
	ds_1 ds_2,
\]
where
\[
	\vphi_\alpha (t)
	=
	1 - \frac 1 { ( 1 + t )^{ \frac \alpha 2 } }.
\]
Here,
we note that if
$ \calE_{ ( \alpha , p ) } ( \vecf ) < \infty $,
then
$ \vecf^\prime $
exists almost everywhere;
we refer the reader to
\cite{B12}
for this fact.
Since
$ \scrN ( \vecf^\prime , \vecf^\prime ) $
is non-negative,
and since
$ \vphi_\alpha $
is smooth for
$ t \geq 0 $,
the derivation and estimation of variational formulae mainly reduce to the study of
$ \scrN $.

We are now in a position to describe our main result of this article.
For the simplicity of notation,
we will use
$\Delta_j^i \vecu$
to mean
$\vecu(s_i)-\vecu(s_j)$
for a function
$\vecu : \R/\calL\Z \to \R^d$,
where
$d=1$
or
$n$,
and
$\Delta_j^i s$
to mean
$s_i-s_j$
for
$s_i$,
$s_j \in \R/\calL\Z$.
Furthermore,
we rewrite
$\Delta_2^1 \vecu$
and
$\scrD(\vecf(s_1),\vecf(s_2))$
as
$\Delta \vecu$
and
$\scrD(\vecf)$,
respectively.
Also,
we let
$\vectau = \vecf^\prime$
be the unit tangent vector.

For a geometric quantity
$\scrF(\vecf)$
of the closed curve
$\vecf$,
and for functions
$\vecphi$,
$\vecpsi$,
from
$\R/\calL\Z$
to
$\R^n$,
let
$\delta$
and
$\delta^2$
be given by
\begin{align*}
	\delta \scrF(\vecf) [\vecphi]
	&=
	\left. \frac d {d\epsi} \scrF(\vecf + \epsi\vecphi) \right|_{\epsi =0},
\\
	\delta^2 \scrF(\vecf) [\vecphi,\vecpsi]
	&=
	\left.
	\frac{\partial^2}{\partial\epsi_1 \partial\epsi_2} \scrF(\vecf + \epsi_1\vecphi + \epsi_2\vecpsi)
	\right|_{\epsi_1=\epsi_2=0}.
\end{align*}
The first and second variational formulae,
$\scrG_{(\alpha,p)}(\vecf)[\vecphi]$
and
$\scrH_{(\alpha,p)}(\vecf)[\vecphi,\vecpsi]$,
are given by
\begin{align*}
	\scrG_{(\alpha,p)}(\vecf)[\vecphi] \, ds_1 ds_2
	&=
	\delta ( \scrM_{(\alpha,p)}(\vecf) \, ds_1 ds_2 )[\vecphi],
\\
	\scrH_{(\alpha,p)}(\vecf)[\vecphi,\vecpsi]ds_1 ds_2
	&=
	\delta^2 ( \scrM_{(\alpha,p)}(\vecf) \, ds_1 ds_2 )[\vecphi,\vecpsi].
\end{align*}
The purpose of this article is to give certain new expressions and estimates for these variational formulae.
The expression will be given in \S~2.
In \S~3,
we prove the
$ L^1 $,
$ L^\infty $
and
$ C^0 $-estimates for them on the appropriate function spaces.
The first space is the \textit{Sobolev-Slobodeckij space}
$W^{k+\sigma,q}(\R/\calL\Z,\R^n)$
given by
\[
	W^{k+\sigma , q}(\R/\calL\Z,\R^n)
	=
	\{
	\vecf \in W^{k,q}(\R/\calL\Z,\R^n)
	\,|\,
	[\vecf^{(k)}]_{W^{\sigma,q}}
	< \infty
	\}
\]
for
$k \in \{ 0 \} \cup \N$,
$0<\sigma<1$,
and
$q \geq 1$.
We equip
$W^{k+\sigma,q}(\R/\calL\Z,\R^n)$
with the norm
\[
	\| \vecf \|_{W^{k+\sigma , q}}
	=
	\left(
	\| \vecf \|_{W^{k,q}}^q + [\vecf^{(k)}]_{W^{\sigma,q}}^q
	\right)^{\frac 1 q},
\]
where
\[
	[\vecf^{(k)}]_{W^{\sigma,q}}
	=
	\left(
	\iint_{(\R/\calL\Z)^2}
	\frac{ \| \Delta \vecf^{(k)} \|_{\R^n}^q }{ |\Delta s|^{1+\sigma q} }
	ds_1ds_2
	\right)^{\frac 1 q}.
\]
For
$\alpha \in (0,\infty)$,
$p \in [1,\infty)$
such that
$2 \leq \alpha p < 2p+1$,
and
$\sigma = (\alpha p-1)/(2p)$,
it was shown in
\cite{B12}
that
$\calE_{(\alpha,p)}(\vecf) < \infty$
if and only if
$\vecf$
belongs to
$W^{1+\sigma , 2p}(\R/\calL\Z , \R^n) \cap W^{1,\infty}(\R/\calL\Z,\R^n)$
and is bi-Lipschitz.
We will show the
$L^1$-estimate for the variational formulae on the same space.

By use of an appropriate weight,
the
$ L^\infty $-estimate holds on some H\"{o}lder or Lipschitz space
$ C^{k,\beta}(\R/\calL\Z,\R^n)$
with
$ k \in \mathbb{N} \cup \{ 0 \} $
and
$ \beta \in ( 0,1 ] $.
Here,
\[
	C^{k,\beta}(\R/\calL\Z,\R^n)
	=
	\{
	\vecf \in C^k(\R/\calL\Z,\R^n)
	\,|\,
	[\vecf^{(k)}]_{C^{0,\beta}}
	< \infty
	\}
\]
and this space is equipped with the norm
\[
	\| \vecf \|_{C^{k,\beta}}
	=
	\| \vecf \|_{C^k} + [\vecf^{(k)}]_{C^{0,\beta}},
\]
where
\[
	[\vecf^{(k)}]_{C^{0,\beta}}
	=
	\sup_{s_1,s_2 \in \R/\calL\Z}
	\frac{ \| \Delta \vecf^{(k)} \|_{\R^n} }{ |\Delta s|^\beta }.
\]
Note that we will see later that the weight is necessary.

The completion of
$ C^\infty (\R/\calL\Z,\R^n) $
in the H\"{o}lder space is known as the \textit{little H\"{o}lder space}
$h^{k,\beta}(\R/\calL\Z,\R^n)$
when
$ \beta < 1 $.
The completion of
$ C^\infty (\R/\calL\Z,\R^n) $
in
$ C^{k,1} (\R/\calL\Z,\R^n)$
is
$ C^{ k+1 } (\R/\calL\Z,\R^n) $.
We have the continuity of the energy density,
and the first and second variational formulae in the completion space.
The precise statement is as follows.
\begin{theorem}\label{mainthm}
Let
$\alpha \in (0,\infty)$,
$p \in [1,\infty)$
satisfy
$2 \leq \alpha p < 2p+1$,
and set
$\sigma = (\alpha p-1)/(2p)$.
Assume that
$\vecf$
is bi-Lipschitz,
i.e.,
there exists a positive constant
$C_{\mathrm{b}} >0$
such that
$\scrD(\vecf) \leq C_{\mathrm{b}} \| \Delta \vecf \|_{\R^n}$.
\begin{enumerate}\renewcommand{\labelenumi}{\upshape\textrm{\theenumi.}}
\item
If
$\vecf$,
$\vecphi$,
$\vecpsi \in W^{1+\sigma , 2p}(\R/\calL\Z , \R^n) \cap W^{1,\infty}(\R/\calL\Z , \R^n)$,
then
$\scrM_{(\alpha,p)} (\vecf)$,
$\scrG_{(\alpha,p)} (\vecf)[\vecphi]$,
$\scrH_{(\alpha,p)} (\vecf)[\vecphi,\vecpsi] \in L^1((\R/\calL\Z)^2)$.
Moreover,
there exists a positive constant
$C$
depending on
$\| \vectau \|_{W^{\sigma , 2p} \cap L^\infty}$,
$C_{\mathrm{b}}$,
$\alpha$,
and
$p$
such that
\begin{align*}
	\| \scrM_{(\alpha,p)} (\vecf) \|_{L^1((\R/\calL\Z)^2)} &\leq C,
\\
	\| \scrG_{(\alpha,p)} (\vecf)[\vecphi] \|_{L^1((\R/\calL\Z)^2)}
	&\leq C \| \vecphi^\prime \|_{W^{\sigma , 2p} \cap L^\infty},
\\
	\| \scrH_{(\alpha,p)} (\vecf)[\vecphi,\vecpsi] \|_{L^1((\R/\calL\Z)^2)}
	&\leq
	C \| \vecphi^\prime \|_{W^{\sigma , 2p} \cap L^\infty}
	\| \vecpsi^\prime \|_{W^{\sigma , 2p} \cap L^\infty}.
\end{align*}
\item
Let
$\beta \in (0,1]$.
If
$\vecf$,
$\vecphi$,
$\vecpsi \in C^{1,\beta}(\R/\calL\Z , \R^n)$,
then
$\scrD(\vecf)^{(\alpha-2\beta)p} \scrM_{(\alpha,p)} (\vecf)$,
$\scrD(\vecf)^{(\alpha-2\beta)p} \scrG_{(\alpha,p)} (\vecf)[\vecphi]$,
$\scrD(\vecf)^{(\alpha-2\beta)p} \scrH_{(\alpha,p)} (\vecf)[\vecphi,\vecpsi] \in L^\infty((\R/\calL\Z)^2)$.
Moreover,
there exists a positive constant
$C$
depending on
$\| \vectau \|_{C^{0,\beta}}$,
$C_{\mathrm{b}}$,
$\alpha$,
and
$p$
such that
\begin{align*}
	\| \scrD(\vecf)^{(\alpha-2\beta)p} \scrM_{(\alpha,p)} (\vecf) \|_{L^\infty((\R/\calL\Z)^2)}
	&\leq C,
\\
	\| \scrD(\vecf)^{(\alpha-2\beta)p} \scrG_{(\alpha,p)} (\vecf)[\vecphi] \|_{L^\infty((\R/\calL\Z)^2)}
	&\leq C \| \vecphi^\prime \|_{C^{0,\beta}},
\\
	\| \scrD(\vecf)^{(\alpha-2\beta)p}
	\scrH_{(\alpha,p)} (\vecf)[\vecphi,\vecpsi] \|_{L^\infty((\R/\calL\Z)^2)}
	&\leq
	C \| \vecphi^\prime \|_{C^{0,\beta}} \| \vecpsi^\prime \|_{C^{0,\beta}}.
\end{align*}
\item
For
$\beta \in (0,1]$,
let
\[
	X^\beta(\R/\calL\Z,\R^n) = \left\{
	\begin{array}{ll}
		h^{0,\beta}(\R/\calL\Z , \R^n) & \mbox{for}\ 0<\beta<1,
	\vspace{3pt}\\
		C^1(\R/\calL\Z , \R^n) & \mbox{for}\ \beta =1.
	\end{array}
	\right.
\]
If
$\vectau$,
$\vecphi^\prime$,
$\vecpsi^\prime \in X^\beta(\R/\calL\Z , \R^n)$,
then
$\scrD(\vecf)^{(\alpha-2\beta)p} \scrM_{(\alpha,p)} (\vecf)$,
$\scrD(\vecf)^{(\alpha-2\beta)p} \scrG_{(\alpha,p)} (\vecf)[\vecphi]$,
$\scrD(\vecf)^{(\alpha-2\beta)p} \scrH_{(\alpha,p)} (\vecf)[\vecphi,\vecpsi]$
can be extended to the diagonal set
$\{ (s,s) \,|\, s \in \R/\calL\Z \}$
such that these functions are continuous everywhere on
$(\R/\calL\Z)^2$,
and they satisfy the estimates
\begin{align*}
	\| \scrD(\vecf)^{(\alpha-2\beta)p} \scrM_{(\alpha,p)} (\vecf) \|_{C^0((\R/\calL\Z)^2)}
	&\leq
	C,
\\
	\| \scrD(\vecf)^{(\alpha-2\beta)p} \scrG_{(\alpha,p)} (\vecf)[\vecphi] \|_{C^0((\R/\calL\Z)^2)}
	&\leq
	C \| \vecphi^\prime \|_{X^\beta},
\\
	\| \scrD(\vecf)^{(\alpha-2\beta)p} \scrH_{(\alpha,p)} (\vecf)[\vecphi,\vecpsi] \|_{C^0((\R/\calL\Z)^2)}
	&\leq
	C \| \vecphi^\prime \|_{X^\beta} \| \vecpsi^\prime \|_{X^\beta}
\end{align*}
for some positive constant
$C$
depending on
$\| \vectau \|_{X^\beta}$,
$C_{\mathrm{b}}$,
$\alpha$,
and
$p$.
The limit functions
\begin{gather*}
	\lim_{(s_1,s_2) \to (s,s)}
	\scrD(\vecf)^{(\alpha-2\beta)p} \scrM_{(\alpha,p)} (\vecf),
\\
	\lim_{(s_1,s_2) \to (s,s)}
	\scrD(\vecf)^{(\alpha-2\beta)p} \scrG_{(\alpha,p)} (\vecf)[\vecphi],
\\
	\lim_{(s_1,s_2) \to (s,s)}
	\scrD(\vecf)^{(\alpha-2\beta)p} \scrH_{(\alpha,p)} (\vecf)[\vecphi,\vecpsi]
\end{gather*}
exist and are finite.
These vanish everywhere on
$\R/\calL\Z$
when
$\beta \in (0,1)$.
\end{enumerate}
\end{theorem}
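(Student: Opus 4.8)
The plan is to funnel the entire theorem through the single function $\scrN$. In \S~2 the expressions for $\scrG_{(\alpha,p)}$ and $\scrH_{(\alpha,p)}$ are obtained by differentiating the identity $\scrM_{(\alpha,p)}(\vecf)\,ds_1\,ds_2=(\vphi_\alpha(\scrN(\vectau,\vectau))/\|\Delta\vecf\|^\alpha)^p\,ds_1\,ds_2$ along $\vecf+\epsi\vecphi$ and $\vecf+\epsi_1\vecphi+\epsi_2\vecpsi$. A perturbation destroys the arclength parametrization, so I would first record that the same $\scrN$-representation survives when the density is viewed as a measure against $d\mu_\epsi=|(\vecf+\epsi\vecphi)'|\,ds$ with unit tangent $\vectau_\epsi$ and $\scrD(\vecf+\epsi\vecphi)=\mu_\epsi([s_1,s_2])$, and then differentiate at $\epsi=0$. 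The variation of $\vectau_\epsi$ produces the normal part $\vecphi'-(\vectau\cdot\vecphi')\vectau$ of $\vecphi'$, that of $|(\vecf+\epsi\vecphi)'|$ the scalar $\vectau\cdot\vecphi'$, and that of $\|\Delta(\vecf+\epsi\vecphi)\|^2$ the ratio $\Delta\vecf\cdot\Delta\vecphi/\|\Delta\vecf\|^2$; bilinearity of $\scrN$ then shows that $\scrG_{(\alpha,p)}$ and $\scrH_{(\alpha,p)}$ are finite sums of terms
\[
	(\text{coefficient})\times\|\Delta\vecf\|^{-\alpha p}\times\Bigl(\textstyle\prod\scrN(\,\cdot\,,\,\cdot\,)\Bigr),
\]
where the coefficient is a product of factors among $\vphi_\alpha^{(j)}(\scrN(\vectau,\vectau))$, powers of $\vphi_\alpha(\scrN(\vectau,\vectau))$, the scalars $\vectau\cdot\vecphi'$, $\vectau\cdot\vecpsi'$ and the ratio above, and each $\scrN$-factor pairs two vectors from $\{\vectau,\ \vecphi'-(\vectau\cdot\vecphi')\vectau,\ \vecpsi'-(\vectau\cdot\vecpsi')\vectau\}$ (the measure being weighted by $\|\vectau(s_3)-\vectau(s_4)\|^2$ in some terms). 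Since $\vphi_\alpha$ is smooth and concave on $[0,\infty)$ with $\vphi_\alpha(0)=0$, and since $0\le\scrN(\vectau,\vectau)=|\Delta s|^2/\|\Delta\vecf\|^2-1\le C_{\mathrm{b}}^2-1$ on a bi-Lipschitz curve, every coefficient is bounded by a constant (depending on $C_{\mathrm{b}},\alpha,p$) times $1$, $\|\vecphi'\|_{L^\infty}$, or $\|\vecphi'\|_{L^\infty}\|\vecpsi'\|_{L^\infty}$. The whole problem is thereby reduced to the $\scrN$-factors.

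The technical core is a pair of estimates for $\scrN$. By Cauchy--Schwarz, $|\scrN(\vecu,\vecv)|\le\scrN(\vecu,\vecu)^{1/2}\scrN(\vecv,\vecv)^{1/2}$, so it suffices to bound $\scrN(\vecu,\vecu)=(2\|\Delta\vecf\|^2)^{-1}\iint_{[s_1,s_2]^2}\|\vecu(s_3)-\vecu(s_4)\|^2\,ds_3\,ds_4$ (and its weighted variants, treated identically). Using the bi-Lipschitz bound $\|\Delta\vecf\|\ge\scrD(\vecf)/C_{\mathrm{b}}$ together with $\scrD(\vecf)=|\Delta s|$ on the short arc, one obtains on one hand
\[
	\scrN(\vecu,\vecu)\le C(C_{\mathrm{b}})\,\omega_\vecu(|\Delta s|)^2\,|\Delta s|^{2\beta},\qquad \omega_\vecu(\delta):=\sup_{0<|h|\le\delta}\frac{\|\vecu(\cdot+h)-\vecu(\cdot)\|}{|h|^\beta},
\]
hence in particular $\scrN(\vecu,\vecu)\le C(C_{\mathrm{b}})[\vecu]_{C^{0,\beta}}^2|\Delta s|^{2\beta}$; on the other hand, by the power-mean inequality on $[s_1,s_2]^2$ with exponent pair $(2,2p)$,
\[
	\scrN(\vecu,\vecu)\le C(C_{\mathrm{b}},p)\left(\frac1{|\Delta s|^2}\iint_{[s_1,s_2]^2}\|\vecu(s_3)-\vecu(s_4)\|^{2p}\,ds_3\,ds_4\right)^{1/p}.
\]
Away from the diagonal $\|\Delta\vecf\|$ is bounded below, so there all three densities are continuous and bounded and nothing further is needed.

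For Part~2 I would insert the first $\scrN$-bound: after Cauchy--Schwarz the $\scrN$-factors of $\scrM_{(\alpha,p)}$, $\scrG_{(\alpha,p)}$, $\scrH_{(\alpha,p)}$ contribute $|\Delta s|^{2\beta p}$ times $1$, $[\vecphi']_{C^{0,\beta}}$, $[\vecphi']_{C^{0,\beta}}[\vecpsi']_{C^{0,\beta}}$ respectively, and since $\scrD(\vecf)\asymp\|\Delta\vecf\|\asymp|\Delta s|$ up to $C_{\mathrm{b}}$, the prefactor $\scrD(\vecf)^{(\alpha-2\beta)p}\|\Delta\vecf\|^{-\alpha p}\asymp|\Delta s|^{-2\beta p}$ cancels it, yielding the claimed $L^\infty$ bounds. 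For Part~3, continuity off the diagonal is immediate; at a diagonal point, when $\beta<1$ and $\vecu$ lies in the little H\"older space one has $\omega_\vecu(\delta)\to0$ as $\delta\to0$, so the refined bound forces each weighted density to tend to $0$; when $\beta=1$ and $\vecu\in C^1$, $\omega_\vecu$ is only bounded, and I would instead substitute the expansions $\|\Delta\vecf\|^2=|\Delta s|^2-\tfrac1{12}\|\veckappa\|^2|\Delta s|^4+o(|\Delta s|^4)$ and $\Delta\vecf\cdot\Delta\vecphi/\|\Delta\vecf\|^2\to\vectau\cdot\vecphi'$, etc. (all uniform on $\R/\calL\Z$), to read off finite, in general nonzero, limits, the uniformity giving joint continuity of the extensions.

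For Part~1 I would insert the second $\scrN$-bound into \emph{every} $\scrN$-factor, keeping the exponent $2p$ throughout; crucially one must not dominate a factor $\scrN(\vecphi'-(\vectau\cdot\vecphi')\vectau,\cdot)^{1/2}$ by $C_{\mathrm{b}}\|\vecphi'\|_{L^\infty}$ but retain the corresponding average of $\|\Delta\vecphi'\|^{2p}$ (and of $\|\Delta\vecpsi'\|^{2p}$), so that the exponents attached to the averages of $\|\Delta\vectau\|^{2p}$, $\|\Delta\vecphi'\|^{2p}$, $\|\Delta\vecpsi'\|^{2p}$ sum to exactly $1$; H\"older's inequality in $(s_1,s_2)$ with the conjugate exponents then reduces everything to integrals $\iint_{(\R/\calL\Z)^2}|\Delta s|^{-\alpha p-2}\bigl(\iint_{[s_1,s_2]^2}\|\Delta\vecu\|^{2p}\bigr)\,ds_1\,ds_2$, which by Fubini and the elementary evaluation $\int_0^\infty\!\!\int_0^\infty(u+v+c)^{-\alpha p-2}\,du\,dv=C_{\alpha,p}\,c^{-\alpha p}$ (finite since $\alpha p>0$) equal $C_{\alpha,p}[\vecu]_{W^{\sigma,2p}}^{2p}$, using $\alpha p=1+2p\sigma$; collecting terms yields the $L^1$ bounds, and running the same estimates uniformly in $\epsi$ on $\vecf+\epsi\vecphi$ legitimizes differentiating under the double integral. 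I expect the main obstacle to be, first, deriving the variational formulae at all — the $p=1$ decomposition of \cite{IN14,IN15} is unavailable, and the novelty is precisely the passage through $\scrN$ and $\vphi_\alpha$ — and, second, within the $L^1$ estimate for $\scrH_{(\alpha,p)}$, the bookkeeping forced by the factor $(\vphi_\alpha(\scrN(\vectau,\vectau))/\|\Delta\vecf\|^\alpha)^{p-2}$, which carries a \emph{negative} power of $\scrN(\vectau,\vectau)$ when $p<2$: one must apply Cauchy--Schwarz to the accompanying mixed $\scrN$-factors before any size estimate so that the surplus $\scrN(\vectau,\vectau)^{1/2}$-factors restore a nonnegative power $\scrN(\vectau,\vectau)^{p-1}$, after which the exponent-matching above goes through.
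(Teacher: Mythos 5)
Your proposal is correct in substance and follows the same overall architecture as the paper: everything is funnelled through $\scrN$, with a weighted $L^p$ bound for $\scrN(\vecu,\vecv)/\|\Delta\vecf\|_{\R^n}^\alpha$ giving Part 1, a weighted $L^\infty$ bound giving Part 2, and diagonal limits giving Part 3 — this is precisely the content of the paper's Lemma \ref{estimates N}, combined with Theorem \ref{variation}, Lemma \ref{variation N} and Proposition \ref{variation Malpha}. Two of your steps genuinely differ and are worth recording. First, your proof of the key $L^p$ bound — Jensen's inequality to pass from $\|\Delta_4^3\vecu\|^2$ to $\|\Delta_4^3\vecu\|^{2p}$, then Fubini and the elementary identity $\int_0^\infty\!\int_0^\infty(u+v+c)^{-\alpha p-2}\,du\,dv=C_{\alpha,p}\,c^{-\alpha p}$ — replaces the paper's two successive changes of variables ($s_3=t_2+t_1t_3$, then $w_1=(t_3-t_4)t_1$); both land on $[\vecu]_{W^{\sigma,2p}}^{2p}$ via $\alpha p=1+2p\sigma$, and yours is arguably the more transparent computation. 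Second, your treatment of $\scrH_2$ for $1<p<2$ is \emph{more} careful than the paper's: the paper writes $\|\scrH_2\|_{L^1}\le p(p-1)\|\scrM_\alpha\|_{L^p}^{p-2}\|\delta\scrM_\alpha[\vecphi]\|_{L^p}\|\delta\scrM_\alpha[\vecpsi]\|_{L^p}$ ``by H\"older'', which is not a legitimate H\"older application when the exponent $p-2$ is negative. Your pointwise repair — use $\vphi_\alpha(t)\asymp t$ on the bounded range of $\scrN(\vectau)$ and apply Cauchy--Schwarz $|\scrN(\vectau,\vecu)|\le\scrN(\vectau)^{1/2}\scrN(\vecu)^{1/2}$ to the mixed factors \emph{before} any size estimate, so that $\scrN(\vectau)$ carries the nonnegative exponent $p-1$ — is exactly what is needed, and your exponent bookkeeping $\tfrac{p-1}{p}+\tfrac{1}{2p}+\tfrac{1}{2p}=1$ closes the argument.

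One small omission: the structural form of the second variation is asserted rather than derived, and the actual formula (Lemma \ref{variation N}) contains the term $\scrS_5=-2\scrN((\vectau\cdot\vecphi^\prime),(\vectau\cdot\vecpsi^\prime))$, whose arguments are the scalar products themselves rather than the projected vectors in your list. This is harmless for your scheme, but it means your estimates additionally require the product-rule seminorm bounds $[(\vectau\cdot\vecphi^\prime)]_{W^{\sigma,2p}}\le C(\|\vectau\|_{L^\infty}[\vecphi^\prime]_{W^{\sigma,2p}}+[\vectau]_{W^{\sigma,2p}}\|\vecphi^\prime\|_{L^\infty})$ and its H\"older and little-H\"older analogues (the paper's Lemma \ref{estimates inner product}); you should state these explicitly, since they are where the full norms $\|\vecphi^\prime\|_{W^{\sigma,2p}\cap L^\infty}$ and $\|\vecphi^\prime\|_{C^{0,\beta}}$, rather than the seminorms alone, enter the final bounds.
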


The second assertion gives us the
$L^\infty$-estimates for
$\scrM_{(\alpha,p)}$,
$\scrG_{(\alpha,p)}$,
and
$\scrH_{(\alpha,p)}$ without the weight,
when
$\alpha \leq 2$.

\begin{corollary}\label{mainthm coro}
Let
$\alpha \in (0,\infty)$,
$p \in [1,\infty)$
satisfy
$2/p \leq \alpha \leq 2$.
If
$\vecf$,
$\vecphi$,
$\vecpsi \in C^{1,\frac \alpha 2}(\R/\calL\Z , \R^n)$
and
$\vecf$
is bi-Lipschitz,
then
$\scrM_{(\alpha,p)} (\vecf)$,
$\scrG_{(\alpha,p)} (\vecf)[\vecphi]$,
and
$\scrH_{(\alpha,p)} (\vecf)[\vecphi,\vecpsi]$
belong to
$L^\infty((\R/\calL\Z)^2)$.
\end{corollary}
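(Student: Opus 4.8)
The plan is to obtain this statement as a direct specialization of assertion~2 of Theorem~\ref{mainthm} to the H\"older exponent $\beta=\alpha/2$, so the whole argument is a matter of checking that the parameter hypotheses match up. First I would verify that the standing assumptions of Theorem~\ref{mainthm} are in force under $2/p\le\alpha\le 2$: multiplying by $p\ge 1$ gives $2\le\alpha p\le 2p<2p+1$, so the requirement $2\le\alpha p<2p+1$ holds and $\sigma=(\alpha p-1)/(2p)$ is well defined; moreover $\vecf$ is assumed bi-Lipschitz, which is the remaining global hypothesis of the theorem.

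Next I would set $\beta=\alpha/2$ and check it is admissible in assertion~2, i.e.\ that $\beta\in(0,1]$: the bound $\alpha\le 2$ gives $\beta\le 1$, and $\alpha\ge 2/p>0$ gives $\beta>0$. In particular the endpoint $\alpha=2$ corresponds to $\beta=1$, the Lipschitz case, which assertion~2 explicitly allows; nowhere is the strict inequality $\beta<1$ needed. With this choice one has $C^{1,\beta}(\R/\calL\Z,\R^n)=C^{1,\alpha/2}(\R/\calL\Z,\R^n)$, so the hypothesis $\vecf,\vecphi,\vecpsi\in C^{1,\alpha/2}$ is precisely the hypothesis $\vecf,\vecphi,\vecpsi\in C^{1,\beta}$ of assertion~2, and $\vectau=\vecf'\in C^{0,\alpha/2}$, so the constant delivered by the theorem depends only on $\|\vectau\|_{C^{0,\alpha/2}}$, $C_{\mathrm b}$, $\alpha$, and $p$.

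The key observation is that for $\beta=\alpha/2$ the weight exponent vanishes, $\alpha-2\beta=0$, so the weight $\scrD(\vecf)^{(\alpha-2\beta)p}=\scrD(\vecf)^{0}$ equals $1$ wherever it is meaningful, namely off the diagonal $\{(s,s)\mid s\in\R/\calL\Z\}$, a set of measure zero in $(\R/\calL\Z)^2$. Hence the three $L^\infty$-bounds of assertion~2 reduce to
\[
	\|\scrM_{(\alpha,p)}(\vecf)\|_{L^\infty}\le C,\qquad
	\|\scrG_{(\alpha,p)}(\vecf)[\vecphi]\|_{L^\infty}\le C\|\vecphi'\|_{C^{0,\alpha/2}},\qquad
	\|\scrH_{(\alpha,p)}(\vecf)[\vecphi,\vecpsi]\|_{L^\infty}\le C\|\vecphi'\|_{C^{0,\alpha/2}}\|\vecpsi'\|_{C^{0,\alpha/2}},
\]
which in particular yields $\scrM_{(\alpha,p)}(\vecf),\ \scrG_{(\alpha,p)}(\vecf)[\vecphi],\ \scrH_{(\alpha,p)}(\vecf)[\vecphi,\vecpsi]\in L^\infty((\R/\calL\Z)^2)$, as claimed. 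There is no genuine analytic obstacle here: the only thing to be careful about is the parameter bookkeeping, namely that $2/p\le\alpha\le 2$ forces \emph{both} $2\le\alpha p<2p+1$ (so Theorem~\ref{mainthm} is applicable) \emph{and} $\alpha/2\in(0,1]$ (so assertion~2 is applicable with $\beta=\alpha/2$), and that the endpoint $\alpha=2$ is truly included since assertion~2 covers $\beta=1$.
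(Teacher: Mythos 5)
Your proposal is correct and is exactly the argument the paper intends: the corollary is obtained by specializing assertion~2 of Theorem~\ref{mainthm} to $\beta=\alpha/2$, so that the weight exponent $(\alpha-2\beta)p$ vanishes, after checking that $2/p\le\alpha\le 2$ implies both $2\le\alpha p<2p+1$ and $\beta=\alpha/2\in(0,1]$. The paper gives no separate proof beyond the remark preceding the corollary, and your parameter bookkeeping matches it precisely.
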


The corresponding estimates to our main result were shown for the spacial case
$ \calE_{ (2,1) } $
in
\cite{IN15},
and in this sense our result is an extension to a wider class of O'Hara's energies.
\begin{remark}
When
$\alpha >2$,
we need the weight
$ \scrD ( \vecf )^{ ( \alpha - 2 \beta ) p } $
to obtain the uniform boundedness even if
$ \vecf $
is analytic.
For example,
let us consider the right circle with the total length
$ 2 \pi $,
i.e.,
\[
	\vecf(s) = (\cos s , \sin s , 0, \ldots ,0)
	\quad
	\mbox{for}\ s \in \R/2\pi\Z.
\]
Since
$\| \Delta \vecf \|_{\R^n}^2 = 2(1-\cos \Delta s)$,
it follows that
\begin{equation}\label{eq remark}
	\scrM_{(\alpha,p)}(\vecf)
	=
	\frac 1 { \| \Delta \vecf \|_{\R^n}^{\alpha p} }
	\left[
	1-
	\left\{
	\frac{ 2(1-\cos\Delta s) }{ (\Delta s)^2 }
	\right\}^{\frac \alpha 2}
	\right]^p.
\end{equation}
Using Taylor's theorem
\[
	2(1-\cos x)
	=
	x^2 - \frac 1 {12} x^4 + \calO(x^6)
	\quad
	\text{as}\ x \to 0,
\]
and thus we have
\[
	1-
	\left\{
	\frac{ 2(1-\cos\Delta s) }{ (\Delta s)^2 }
	\right\}^{\frac \alpha 2}
	=
	\frac \alpha {24} (\Delta s)^2
	+
	\calO((\Delta s)^4)
	\quad
	\text{as}\ \Delta s \to 0.
\]
Hence,
the right-hand side of
\eqref{eq remark}
is equal to
\[
	\left(
	\frac{ |\Delta s| }{ \| \Delta \vecf \|_{\R^n} }
	\right)^{\alpha p}
	\left(
	\frac \alpha {24} |\Delta s|^{2-\alpha}
	+
	\calO(|\Delta s|^{4-\alpha})
	\right)^p
	\quad
	\text{as}\ \Delta s \to 0.
\]
Therefore,
when
$\alpha >2$,
$ \scrD ( \vecf )^\gamma \scrM_{(\alpha,p)}(\vecf) $
is uniformly bounded if and only if
$ \gamma \geq ( \alpha - 2 ) p $.
\end{remark}

\paragraph*{Acknowledgments}
The authors would like to thank Professor Richard Neal Bez for English language editing and mathematical comments.

The second author is supported by Grant-in-Aid for Scientific Research (C) (No.17K05310),
Japan Society for the Promotion of Science.

%
%

\section{Variational formulae}\label{Sect. variational formulae}

We begin by recalling the definition of the function
$\scrN$.
For functions
$\vecu$,
$\vecv : \R/\calL\Z \to \R^d$,
where
$d=1$
or
$n$,
we set
\[
	\scrN(\vecu,\vecv)
	=
	\frac 1 { 2 \| \Delta \vecf \|_{\R^n}^2 }
	\int_{s_2}^{s_1} \int_{s_2}^{s_1}
	\Delta_4^3 \vecu \cdot \Delta_4^3 \vecv
	ds_3ds_4.
\]
We use
$\scrN(\vecu)$
instead of
$\scrN(\vecu,\vecu)$
for simplicity.

Setting
\begin{equation}\label{def Malpha}
	\scrM_\alpha(\vecf)
	=
	\frac{ \vphi_\alpha(\scrN(\vectau)) }{ \| \Delta \vecf \|_{\R^n}^\alpha },
\end{equation}
then the energy can be written as
\[
	\calE_{(\alpha,p)}(\vecf)
	=
	\iint_{(\R/\calL\Z)^2} ( \scrM_\alpha(\vecf) )^p ds_1ds_2.
\]
The first variational formula
$\scrG_{(\alpha,p)}$
can be derived as
\[
	\scrG_{(\alpha,p)}(\vecf)[\vecphi] ds_1ds_2
	=
	\delta \{ ( \scrM_\alpha(\vecf) )^p \} [\vecphi] ds_1ds_2
	+
	\scrM_\alpha(\vecf) \delta (ds_1ds_2)[\vecphi],
\]
and
\[
	\delta \{ (\scrM_\alpha(\vecf))^p \} [\vecphi]
	=
	p (\scrM_\alpha(\vecf))^{p-1} \delta \scrM_\alpha(\vecf) [\vecphi].
\]
The second variational formula
$\scrH_{(\alpha,p)}$
is calculated similarly.
Since
$\delta(ds_j) [\vecphi] = \vectau(s_j) \cdot \vecphi^\prime(s_j) ds_j$
holds
(see,
for example,
\cite{IN15}),
we obtain the following.
\begin{theorem}\label{variation}
$\scrG_{(\alpha,p)}(\vecf)[\vecphi]$
and
$\scrH_{(\alpha,p)}(\vecf)[\vecphi,\vecpsi]$
can be written as
\[
	\scrG_{(\alpha,p)}(\vecf)[\vecphi]
	=
	\sum_{i=1}^2 \scrG_i(\vecf)[\vecphi],
	\quad
	\scrH_{(\alpha,p)}(\vecf)[\vecphi,\vecpsi]
	=
	\sum_{i=1}^6 \scrH_i(\vecf)[\vecphi,\vecpsi],
\]
where
\begin{align*}
	\scrG_1(\vecf)[\vecphi]
	&=
	p(\scrM_\alpha(\vecf))^{p-1} \delta \scrM_\alpha(\vecf)[\vecphi],
\\
	\scrG_2(\vecf)[\vecphi]
	&=
	( \scrM_\alpha(\vecf) )^p
	( \vectau(s_1) \cdot \vecphi^\prime(s_1) + \vectau(s_2) \cdot \vecphi^\prime(s_2) ),
\\
	\scrH_1(\vecf)[\vecphi,\vecpsi]
	&=
	p(\scrM_\alpha(\vecf))^{p-1} \delta^2 \scrM_\alpha(\vecf) [\vecphi,\vecpsi],
\\
	\scrH_2(\vecf)[\vecphi,\vecpsi]
	&=
	p(p-1) ( \scrM_\alpha(\vecf) )^{p-2}
	\delta \scrM_\alpha(\vecf) [\vecphi] \delta \scrM_\alpha(\vecf) [\vecpsi],
\\
	\scrH_3(\vecf)[\vecphi,\vecpsi]
	&=
	\scrG_1(\vecf) [\vecphi]
	( \vectau(s_1) \cdot \vecpsi^\prime(s_1) + \vectau(s_2) \cdot \vecpsi^\prime(s_2) ),
\\
	\scrH_4(\vecf)[\vecphi,\vecpsi]
	&=
	\scrG_1(\vecf) [\vecpsi]
	( \vectau(s_1) \cdot \vecphi^\prime(s_1) + \vectau(s_2) \cdot \vecphi^\prime(s_2) ),
\\
	\scrH_5(\vecf)[\vecphi,\vecpsi]
	&=
	( \scrM_\alpha(\vecf) )^p
	\{
	\vecphi^\prime(s_1) \cdot \vecpsi^\prime(s_1) + \vecphi^\prime(s_2) \cdot \vecpsi^\prime(s_2)
\\
	&\quad
	\left.
	-2 ( \vectau(s_1) \cdot \vecphi^\prime(s_1) )( \vectau(s_1) \cdot \vecpsi^\prime(s_1) )
	-2 ( \vectau(s_2) \cdot \vecphi^\prime(s_2) )( \vectau(s_2) \cdot \vecpsi^\prime(s_2) )
	\right\},
\\
	\scrH_6(\vecf)[\vecphi,\vecpsi]
	&=
	( \scrM_\alpha(\vecf) )^p
	( \vectau(s_1) \cdot \vecphi^\prime(s_1) + \vectau(s_2) \cdot \vecphi^\prime(s_2) )
	( \vectau(s_1) \cdot \vecpsi^\prime(s_1) + \vectau(s_2) \cdot \vecpsi^\prime(s_2) ).
\end{align*}
\end{theorem}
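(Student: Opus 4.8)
The plan is to differentiate the energy density $\scrM_{(\alpha,p)}(\vecf)\,ds_1ds_2=(\scrM_\alpha(\vecf))^p\,ds_1ds_2$ directly, treating it as the product of the scalar factor $(\scrM_\alpha(\vecf))^p$ and the line-element factor $ds_1ds_2$, and then applying the Leibniz rule to $\delta$ and $\delta^2$. All the differentiations are understood pointwise in $(s_1,s_2)$ off the diagonal, which is what the asserted density-level identities require. First I would record the regularity needed for the chain rule: for such a fixed pair and a non-degenerate curve, $\|\Delta\vecf\|_{\R^n}$ stays bounded away from $0$ under small perturbations, $\scrN(\vectau)\ge0$, and $\vphi_\alpha$ is $C^\infty$ on $[0,\infty)$, so $\epsi\mapsto\scrM_\alpha(\vecf+\epsi\vecphi)$ is smooth; where in addition $\scrM_\alpha(\vecf)>0$, the powers $(\scrM_\alpha(\vecf))^p$, $(\scrM_\alpha(\vecf))^{p-1}$, $(\scrM_\alpha(\vecf))^{p-2}$ are likewise differentiable in the relevant sense. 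Using the identity $\delta(ds_j)[\vecphi]=\vectau(s_j)\cdot\vecphi^\prime(s_j)\,ds_j$ recalled above, one has $\delta(ds_1ds_2)[\vecphi]=(\vectau(s_1)\cdot\vecphi^\prime(s_1)+\vectau(s_2)\cdot\vecphi^\prime(s_2))\,ds_1ds_2$, and combining this with $\delta\{(\scrM_\alpha)^p\}[\vecphi]=p(\scrM_\alpha)^{p-1}\delta\scrM_\alpha[\vecphi]$ through the Leibniz rule yields $\scrG_{(\alpha,p)}[\vecphi]=\scrG_1[\vecphi]+\scrG_2[\vecphi]$; this part is already indicated in the discussion preceding the statement.

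For the second variation I would apply $\partial_{\epsi_1}\partial_{\epsi_2}$ at $\epsi_1=\epsi_2=0$ to $(\scrM_\alpha(\vecf+\epsi_1\vecphi+\epsi_2\vecpsi))^p\,ds_1ds_2$ and expand by the bilinear Leibniz rule $\delta^2(FG)[\vecphi,\vecpsi]=(\delta^2F[\vecphi,\vecpsi])\,G+(\delta F[\vecphi])(\delta G[\vecpsi])+(\delta F[\vecpsi])(\delta G[\vecphi])+F\,(\delta^2G[\vecphi,\vecpsi])$ with $F=(\scrM_\alpha)^p$ and $G=ds_1ds_2$. The term $(\delta^2F)\,G$ splits, via $\delta^2\{(\scrM_\alpha)^p\}[\vecphi,\vecpsi]=p(\scrM_\alpha)^{p-1}\delta^2\scrM_\alpha[\vecphi,\vecpsi]+p(p-1)(\scrM_\alpha)^{p-2}\delta\scrM_\alpha[\vecphi]\,\delta\scrM_\alpha[\vecpsi]$, into $\scrH_1+\scrH_2$. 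The two mixed terms produce $\scrH_3$ and $\scrH_4$ respectively, since $\delta F[\vecphi]=\scrG_1[\vecphi]$ and $\delta G[\vecpsi]=(\vectau(s_1)\cdot\vecpsi^\prime(s_1)+\vectau(s_2)\cdot\vecpsi^\prime(s_2))\,ds_1ds_2$ (and symmetrically with $\vecphi,\vecpsi$ interchanged).

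The only genuinely new computation is $\delta^2(ds_j)[\vecphi,\vecpsi]$, which generates $\scrH_5+\scrH_6$. Writing the perturbed line element as $g(\epsi_1,\epsi_2)^{1/2}\,ds_j$ with $g=\|\vecf^\prime(s_j)+\epsi_1\vecphi^\prime(s_j)+\epsi_2\vecpsi^\prime(s_j)\|_{\R^n}^2$, one has $g(0,0)=1$, $\partial_{\epsi_1}g|_0=2\,\vectau\cdot\vecphi^\prime$, $\partial_{\epsi_2}g|_0=2\,\vectau\cdot\vecpsi^\prime$, $\partial_{\epsi_1}\partial_{\epsi_2}g|_0=2\,\vecphi^\prime\cdot\vecpsi^\prime$, hence $\partial_{\epsi_1}\partial_{\epsi_2}g^{1/2}|_0=\frac{1}{2}\partial_{\epsi_1}\partial_{\epsi_2}g-\frac{1}{4}\partial_{\epsi_1}g\,\partial_{\epsi_2}g=\vecphi^\prime\cdot\vecpsi^\prime-(\vectau\cdot\vecphi^\prime)(\vectau\cdot\vecpsi^\prime)$, so $\delta^2(ds_j)[\vecphi,\vecpsi]=(\vecphi^\prime(s_j)\cdot\vecpsi^\prime(s_j)-(\vectau(s_j)\cdot\vecphi^\prime(s_j))(\vectau(s_j)\cdot\vecpsi^\prime(s_j)))\,ds_j$. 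Substituting this and the first-order pieces into $\delta^2(ds_1ds_2)=(\delta^2ds_1)\,ds_2+(\delta ds_1[\vecphi])(\delta ds_2[\vecpsi])+(\delta ds_1[\vecpsi])(\delta ds_2[\vecphi])+ds_1\,(\delta^2ds_2)$ and multiplying by $(\scrM_\alpha)^p$ gives, after collecting, exactly $\scrH_5+\scrH_6$: the full product of the sums $\sum_j\vectau(s_j)\cdot\vecphi^\prime(s_j)$ and $\sum_j\vectau(s_j)\cdot\vecpsi^\prime(s_j)$ is $\scrH_6$, and the coefficient $-2$ in $\scrH_5$ is precisely the $-1$ coming from $\partial_{\epsi_1}\partial_{\epsi_2}g^{1/2}$ together with the $-1$ needed to cancel the diagonal contribution of $\scrH_6$. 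Adding $\scrH_1,\dots,\scrH_6$ gives the stated identity. I do not expect a serious analytic obstacle in this argument: the substance is the careful bookkeeping of the six terms and the short $g^{1/2}$ calculation, while the subtler questions of $L^1$-, $L^\infty$- and $C^0$-control of these densities are exactly what is postponed to \S~3 and form the technical core of the paper.
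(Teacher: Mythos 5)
Your proposal is correct and follows essentially the same route as the paper, which simply applies the Leibniz rule to $(\scrM_\alpha(\vecf))^p\,ds_1ds_2$ using $\delta(ds_j)[\vecphi]=\vectau(s_j)\cdot\vecphi^\prime(s_j)\,ds_j$ and declares the second variation ``calculated similarly.'' Your explicit computation of $\delta^2(ds_j)[\vecphi,\vecpsi]=\bigl(\vecphi^\prime(s_j)\cdot\vecpsi^\prime(s_j)-(\vectau(s_j)\cdot\vecphi^\prime(s_j))(\vectau(s_j)\cdot\vecpsi^\prime(s_j))\bigr)ds_j$ and the recombination of the cross terms into $\scrH_5+\scrH_6$ check out and supply exactly the detail the paper omits.
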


In the remainder of this section,
we will give the exact expression of
$\delta \scrM_\alpha(\vecf)[\vecphi]$
and
$\delta^2 \scrM_\alpha(\vecf)[\vecphi,\vecpsi]$.
First we note that,
by \eqref{def Malpha},
we have
\begin{equation}
	\delta \scrM_\alpha(\vecf)[\vecphi]
	=
	\vphi_\alpha^\prime(\scrN(\vectau))
	\frac{ \delta \scrN(\vectau) [\vecphi] }{ \| \Delta \vecf \|_{\R^n}^\alpha }
	-
	\frac \alpha 2
	\frac{ \vphi_\alpha(\scrN(\vectau)) \delta \| \Delta \vecf \|_{\R^n}^2 [\vecphi] }
	{ \| \Delta \vecf \|_{\R^n}^{\alpha+2} },
\label{delta Malpha}
\end{equation}
and therefore we need variational formulae for
$\scrN(\vectau)$,
and
$\| \Delta \vecf \|_{\R^n}^2$.
It follows from the definition of
$\scrN(\vectau)$
that 
$\delta \scrN(\vectau)$
can be written in terms of
$\delta \vectau$
as well as
$\delta \| \Delta \vecf \|_{\R^n}^2$.
We can derive the second variational formula from the corresponding ingredients.

Firstly,
we show the ingredients as Lemma \ref{variation lemma}.
The variational formulae
$\delta \scrN(\vectau)$
and
$\delta^2 \scrN(\vectau)$
will be given in the forthcoming Lemma \ref{variation N}.
Also,
we give representations of
$\delta \scrM_\alpha(\vecf)$
and
$\delta^2 \scrM_\alpha(\vecf)$
in Proposition \ref{variation Malpha}.

For
$\vecu$,
$\vecv : \R/\calL\Z \to \R^n$,
let
\[
	\scrK(\vecu,\vecv)
	=
	\frac{ \Delta \vecu \cdot \Delta \vecv }{ \| \Delta \vecf \|_{\R^n}^2 }.
\]

\begin{lemma}\label{variation lemma}
The following variational formulae hold.
\begin{enumerate}\renewcommand{\labelenumi}{\upshape\textrm{\theenumi.}}
\item
$\delta \vectau(s_j)[\vecphi] = \vecphi^\prime(s_j) - (\vectau(s_j) \cdot \vecphi^\prime(s_j)) \vectau(s_j)$.
\item
$
	\delta \| \Delta^i_j \vectau \|_{\R^n}^2[\vecphi]
	=
	2\Delta^i_j \vectau \cdot \Delta^i_j \vecphi^\prime
	-
	\| \Delta^i_j \vectau \|_{\R^n}^2
	( \vectau(s_i) \cdot \vecphi^\prime(s_i) + \vectau(s_j) \cdot \vecphi^\prime(s_j) )
$.
\item
$
	\displaystyle
	\delta \| \Delta \vecf \|_{\R^n}^2 [\vecphi]
	=
	2 \scrK(\vecf,\vecphi) \| \Delta \vecf \|_{\R^n}^2
$.
\item
$
	\displaystyle
	\delta \left( \frac 1 { \| \Delta \vecf \|_{\R^n}^2 } \right)[\vecphi]
	=
	-2
	\frac{ \scrK(\vecf,\vecphi) }{ \| \Delta \vecf \|_{\R^n}^2 }
$.
\item
$
	\displaystyle
	\delta^2 \| \Delta \vecf \|_{\R^n}^2[\vecphi,\vecpsi]
	=
	2 \scrK(\vecphi,\vecpsi) \| \Delta \vecf \|_{\R^n}^2
$.
\end{enumerate}
\end{lemma}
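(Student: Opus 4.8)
The plan is to obtain each of the five identities by a direct differentiation in the variation parameter, organising them into the two formulae concerning the unit tangent $\vectau$ (items 1 and 2) and the three concerning the chord $\Delta\vecf$ (items 3--5). All inputs are elementary, so the work is really just bookkeeping.

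First I would treat item 1. By the definition of the unit tangent, $\vectau(\vecf+\epsi\vecphi)=(\vecf^\prime+\epsi\vecphi^\prime)/\|\vecf^\prime+\epsi\vecphi^\prime\|_{\R^n}$, so $\delta\vectau(s_j)[\vecphi]$ is obtained by differentiating this quotient at $\epsi=0$; using $\left.\frac{d}{d\epsi}\right|_{\epsi=0}\|\vecf^\prime+\epsi\vecphi^\prime\|_{\R^n}=(\vecf^\prime\cdot\vecphi^\prime)/\|\vecf^\prime\|_{\R^n}$ and then inserting $\|\vecf^\prime\|_{\R^n}=1$, $\vecf^\prime=\vectau$ (valid because $\vecf$ is arclength parametrised at $\epsi=0$) yields the asserted formula. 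For item 2, I would start from the identity $\|\Delta^i_j\vectau\|_{\R^n}^2=2\bigl(1-\vectau(s_i)\cdot\vectau(s_j)\bigr)$, which gives $\delta\|\Delta^i_j\vectau\|_{\R^n}^2[\vecphi]=2\,\Delta^i_j\vectau\cdot\bigl(\delta\vectau(s_i)[\vecphi]-\delta\vectau(s_j)[\vecphi]\bigr)$; substituting item 1, the part linear in $\vecphi^\prime$ assembles into $2\,\Delta^i_j\vectau\cdot\Delta^i_j\vecphi^\prime$, while the projection terms contribute
\[
	-2\bigl(\vectau(s_i)-\vectau(s_j)\bigr)\cdot\Bigl((\vectau(s_i)\cdot\vecphi^\prime(s_i))\,\vectau(s_i)-(\vectau(s_j)\cdot\vecphi^\prime(s_j))\,\vectau(s_j)\Bigr).
\]
Expanding this scalar product and using $\|\vectau(s_i)\|_{\R^n}=\|\vectau(s_j)\|_{\R^n}=1$ collapses it to $-\bigl(\vectau(s_i)\cdot\vecphi^\prime(s_i)+\vectau(s_j)\cdot\vecphi^\prime(s_j)\bigr)\cdot 2\bigl(1-\vectau(s_i)\cdot\vectau(s_j)\bigr)$, i.e.\ to $-\|\Delta^i_j\vectau\|_{\R^n}^2\bigl(\vectau(s_i)\cdot\vecphi^\prime(s_i)+\vectau(s_j)\cdot\vecphi^\prime(s_j)\bigr)$, which is the claim.

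For items 3--5 the dependence on the variation parameters is polynomial, so the computations are immediate. Expanding $\|\Delta(\vecf+\epsi\vecphi)\|_{\R^n}^2=\|\Delta\vecf\|_{\R^n}^2+2\epsi\,\Delta\vecf\cdot\Delta\vecphi+\epsi^2\|\Delta\vecphi\|_{\R^n}^2$ and differentiating at $\epsi=0$ gives $\delta\|\Delta\vecf\|_{\R^n}^2[\vecphi]=2\,\Delta\vecf\cdot\Delta\vecphi=2\scrK(\vecf,\vecphi)\|\Delta\vecf\|_{\R^n}^2$ by the definition of $\scrK$, which is item 3; item 4 then follows from item 3 by the chain rule $\delta(1/g)=-g^{-2}\delta g$ with $g=\|\Delta\vecf\|_{\R^n}^2$. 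For item 5, I would expand $\|\Delta\vecf+\epsi_1\Delta\vecphi+\epsi_2\Delta\vecpsi\|_{\R^n}^2$; applying $\partial^2/\partial\epsi_1\partial\epsi_2$ and evaluating at the origin annihilates every term except the cross term $2\epsi_1\epsi_2\,\Delta\vecphi\cdot\Delta\vecpsi$, leaving $2\,\Delta\vecphi\cdot\Delta\vecpsi=2\scrK(\vecphi,\vecpsi)\|\Delta\vecf\|_{\R^n}^2$.

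The only step carrying genuine algebraic content is the collapse of the projection terms in item 2; everything else is a single-line differentiation. The two identities doing the work there are $\|\vectau\|_{\R^n}\equiv 1$ and $\|\Delta^i_j\vectau\|_{\R^n}^2=2(1-\vectau(s_i)\cdot\vectau(s_j))$, so I do not anticipate any real obstacle beyond careful tracking of the indices $i$ and $j$.
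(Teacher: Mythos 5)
Your proposal is correct. The paper itself gives no proof of this lemma --- it simply cites \cite[Lemma 1]{IN15} --- so your self-contained computation is, if anything, more informative than what appears here; it is also the standard argument one would find in that reference. Each step checks out: the quotient-rule differentiation of $(\vecf'+\epsi\vecphi')/\|\vecf'+\epsi\vecphi'\|_{\R^n}$ with $\|\vecf'\|_{\R^n}=1$ gives item 1; and in item 2, writing $A=\vectau(s_i)\cdot\vecphi'(s_i)$, $B=\vectau(s_j)\cdot\vecphi'(s_j)$, the projection terms are
\[
-2\bigl(\vectau(s_i)-\vectau(s_j)\bigr)\cdot\bigl(A\vectau(s_i)-B\vectau(s_j)\bigr)
=-2(A+B)\bigl(1-\vectau(s_i)\cdot\vectau(s_j)\bigr)
=-(A+B)\,\|\Delta^i_j\vectau\|_{\R^n}^2,
\]
exactly as you claim, using only $\|\vectau\|_{\R^n}\equiv1$. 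Items 3--5 are indeed immediate from the quadratic expansion of $\|\Delta\vecf+\epsi_1\Delta\vecphi+\epsi_2\Delta\vecpsi\|_{\R^n}^2$ and the definition of $\scrK$. One small remark: your appeal to $\|\Delta^i_j\vectau\|_{\R^n}^2=2(1-\vectau(s_i)\cdot\vectau(s_j))$ is only needed at the final collapse; the initial differentiation $\delta\|\Delta^i_j\vectau\|_{\R^n}^2=2\,\Delta^i_j\vectau\cdot\delta(\Delta^i_j\vectau)$ follows directly from the product rule without it, which is how you in fact proceed. No gaps.
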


\begin{proof}
See
\cite[Lemma 1]{IN15}.
\end{proof}

Before proving Lemma \ref{variation N},
we establish the following sublemma.

\begin{lemma}\label{variation N sublemma}
\begin{enumerate}\renewcommand{\labelenumi}{\upshape\textrm{\theenumi.}}
\item
Assume that
$\vecu : \R/\calL\Z \to \R^n$
is
$\vectau$
or
$\vecphi^\prime$.
Then,
$\delta \scrN(\vectau,\vecu)[\vecpsi]$
can be written as
\begin{align*}
	\delta \scrN(\vectau,\vecu) [\vecpsi]
	&=
	-2 \scrK(\vecf,\vecpsi) \scrN(\vectau,\vecu)
\\
	&\quad
	+
	\frac 1 { 2 \| \Delta \vecf \|_{\R^n} }
	\int_{s_2}^{s_1} \int_{s_2}^{s_1}
	\{
	\delta (\Delta_4^3 \vectau \cdot \Delta_4^3 \vecu)[\vecpsi]
\\
	&\qquad
	+
	( \Delta_4^3 \vectau \cdot \Delta_4^3 \vecu )
	( \vectau(s_3) \cdot \vecpsi^\prime(s_3) + \vectau(s_4) \cdot \vecpsi^\prime(s_4) )
	\}
	ds_3ds_4,
\end{align*}
where
\begin{multline*}
	\delta (\Delta_4^3 \vectau \cdot \Delta_4^3 \vecu)[\vecpsi]
\\
	=
	\left\{
	\begin{array}{ll}
		2\Delta_4^3 \vectau \cdot \Delta_4^3 \vecphi^\prime
		-
		\| \Delta_4^3 \vectau \|_{\R^n}^2
		( \vectau(s_3) \cdot \vecphi^\prime(s_3) + \vectau(s_4) \cdot \vecphi^\prime(s_4) ),
		& \vecu = \vectau,
	\vspace{5pt}\\
	\begin{aligned}
		\Delta_4^3 \vecphi^\prime \cdot \Delta_4^3 \vecpsi^\prime
		&- ( \Delta_4^3 \vectau \cdot \Delta_4^3 \vecphi^\prime )
		( \vectau(s_3) \cdot \vecpsi^\prime(s_3) + \vectau(s_4) \cdot \vecpsi^\prime(s_4) )
	\\
		&- \{ \Delta_4^3 ( \vectau \cdot \vecphi^\prime ) \}
		\{ \Delta_4^3 ( \vectau \cdot \vecpsi^\prime ) \},
	\end{aligned}
		& \vecu = \vecphi^\prime.
	\end{array}
	\right.
\end{multline*}
\item
It holds that
\[
	\delta \scrK(\vecf,\vecphi)[\vecpsi]
	=
	\scrK(\vecphi,\vecpsi)
	-
	2 \scrK(\vecf,\vecphi) \scrK(\vecf,\vecpsi).
\]
\end{enumerate}
\end{lemma}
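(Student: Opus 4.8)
The plan is to treat the two assertions separately, the second being an immediate application of the product rule and the first requiring a differentiation under the integral sign followed by a short bookkeeping computation. For assertion~2, write $\scrK(\vecf,\vecphi)=(\Delta\vecf\cdot\Delta\vecphi)/\|\Delta\vecf\|_{\R^n}^{2}$. Only $\vecf$ is varied, in the direction $\vecpsi$, while $\vecphi$ is held fixed, so $\delta(\Delta\vecf\cdot\Delta\vecphi)[\vecpsi]=\Delta\vecpsi\cdot\Delta\vecphi=\scrK(\vecphi,\vecpsi)\|\Delta\vecf\|_{\R^n}^{2}$, whereas the fourth identity of Lemma~\ref{variation lemma} gives $\delta\bigl(\|\Delta\vecf\|_{\R^n}^{-2}\bigr)[\vecpsi]=-2\scrK(\vecf,\vecpsi)\|\Delta\vecf\|_{\R^n}^{-2}$. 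Combining these two by the product rule and using $\Delta\vecf\cdot\Delta\vecphi=\scrK(\vecf,\vecphi)\|\Delta\vecf\|_{\R^n}^{2}$ yields at once $\delta\scrK(\vecf,\vecphi)[\vecpsi]=\scrK(\vecphi,\vecpsi)-2\scrK(\vecf,\vecphi)\scrK(\vecf,\vecpsi)$.

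For assertion~1, I would factor $\scrN(\vectau,\vecu)=\tfrac{1}{2\|\Delta\vecf\|_{\R^n}^{2}}\,J$ with $J=\int_{s_2}^{s_1}\int_{s_2}^{s_1}\Delta_4^3\vectau\cdot\Delta_4^3\vecu\,ds_3\,ds_4$; since the variation is computed at a fixed point $(s_1,s_2)$, the limits of the inner integrals are frozen and no boundary terms arise. By the product rule, $\delta\scrN(\vectau,\vecu)[\vecpsi]=\delta\bigl(\tfrac{1}{2\|\Delta\vecf\|_{\R^n}^{2}}\bigr)[\vecpsi]\,J+\tfrac{1}{2\|\Delta\vecf\|_{\R^n}^{2}}\,\delta J[\vecpsi]$, and the fourth identity of Lemma~\ref{variation lemma} turns the first summand into $-2\scrK(\vecf,\vecpsi)\scrN(\vectau,\vecu)$. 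For $\delta J[\vecpsi]$ I would differentiate under the integral sign — legitimate since the integrand together with the arclength elements $ds_3$, $ds_4$ of $\vecf+\epsi\vecpsi$ depends smoothly on $\epsi$, uniformly on the compact square with corners $s_1,s_2$ — and separate the variation of the arclength elements from that of $\Delta_4^3\vectau\cdot\Delta_4^3\vecu$. By the variation $\delta(ds_j)[\vecpsi]=(\vectau(s_j)\cdot\vecpsi'(s_j))\,ds_j$ of the arclength element recorded above, the measure contributes $\int_{s_2}^{s_1}\int_{s_2}^{s_1}(\Delta_4^3\vectau\cdot\Delta_4^3\vecu)(\vectau(s_3)\cdot\vecpsi'(s_3)+\vectau(s_4)\cdot\vecpsi'(s_4))\,ds_3\,ds_4$, which is precisely the second term inside the braces, and the whole integral carries the prefactor $\tfrac{1}{2\|\Delta\vecf\|_{\R^n}^{2}}$.

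There remains the factor $\delta(\Delta_4^3\vectau\cdot\Delta_4^3\vecu)[\vecpsi]$. For $\vecu=\vectau$ this equals $\delta\|\Delta_4^3\vectau\|_{\R^n}^{2}[\vecpsi]$, which the second identity of Lemma~\ref{variation lemma} with indices $3,4$ supplies in the stated form. The case $\vecu=\vecphi'$ is the crux, and the decisive point is that $\vecphi'$ is \emph{not} inert along the variation: it is the derivative of the fixed map $\vecphi$ with respect to the arclength parameter of $\vecf+\epsi\vecpsi$, so that differentiating $\delta(ds_j)[\vecpsi]=(\vectau(s_j)\cdot\vecpsi'(s_j))\,ds_j$ yields $\delta(\vecphi'(s_j))[\vecpsi]=-(\vectau(s_j)\cdot\vecpsi'(s_j))\,\vecphi'(s_j)$. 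Expanding $\delta\bigl((\vectau(s_3)-\vectau(s_4))\cdot(\vecphi'(s_3)-\vecphi'(s_4))\bigr)[\vecpsi]$ by the Leibniz rule, inserting $\delta\vectau(s_j)[\vecpsi]=\vecpsi'(s_j)-(\vectau(s_j)\cdot\vecpsi'(s_j))\vectau(s_j)$ from the first identity of Lemma~\ref{variation lemma} together with the formula just derived for $\delta\vecphi'(s_j)[\vecpsi]$, and using $\|\vectau(s_j)\|_{\R^n}=1$, one is left with $\Delta_4^3\vecphi'\cdot\Delta_4^3\vecpsi'$ plus several terms bilinear in the tangential components $\vectau(s_j)\cdot\vecphi'(s_j)$ and $\vectau(s_j)\cdot\vecpsi'(s_j)$; these regroup exactly into $-(\Delta_4^3\vectau\cdot\Delta_4^3\vecphi')(\vectau(s_3)\cdot\vecpsi'(s_3)+\vectau(s_4)\cdot\vecpsi'(s_4))-\{\Delta_4^3(\vectau\cdot\vecphi')\}\{\Delta_4^3(\vectau\cdot\vecpsi')\}$, which is the asserted expression.

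The only real obstacle is this last algebraic reorganization in the case $\vecu=\vecphi'$; there is no analytic subtlety, and the single pitfall is to remember that both $\vecphi'$ and the arclength element $ds_j$ must be varied rather than treated as constants, since overlooking either transformation law destroys the identity.
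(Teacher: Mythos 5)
Your proposal is correct and follows essentially the same route as the paper: the product rule combined with items 1, 2 and 4 of Lemma \ref{variation lemma}, the variation $\delta(ds_j)[\vecpsi]=(\vectau(s_j)\cdot\vecpsi^\prime(s_j))\,ds_j$ of the arclength measure, and the transformation law $\delta\vecphi^\prime(s_j)[\vecpsi]=-(\vectau(s_j)\cdot\vecpsi^\prime(s_j))\vecphi^\prime(s_j)$, followed by the Leibniz expansion of $\Delta_4^3$ of products and the regrouping via $\vectau(s_4)\cdot\Delta_4^3\vecphi^\prime+\Delta_4^3\vectau\cdot\vecphi^\prime(s_3)=\Delta_4^3(\vectau\cdot\vecphi^\prime)$. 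The only difference is that you describe this last algebraic step rather than writing it out, but all the ingredients and the resulting identity are correctly identified.
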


\begin{proof}
\begin{enumerate}
\item By Lemma \ref{variation lemma},
we have
\begin{align*}
	\delta \scrN(\vectau,\vecu)[\vecpsi]
	&=
	\delta
	\left(
	\frac 1 { 2\| \Delta \vecf \|_{\R^n}^2 }
	\int_{s_2}^{s_1} \int_{s_2}^{s_1}
	\Delta_4^3 \vectau \cdot \Delta_4^3 \vecu ds_3ds_4
	\right) [\vecpsi]
\\
	&=
	\frac 1 2
	\delta
	\left(
	\frac 1 { \| \Delta \vecf \|_{\R^n}^2 }
	\right) [\vecpsi]
	\int_{s_2}^{s_1} \int_{s_2}^{s_1}
	\Delta_4^3 \vectau \cdot \Delta_4^3 \vecu ds_3ds_4
\\
	&\quad
	+
	\frac 1 { 2\| \Delta \vecf \|_{\R^n}^2 }
	\int_{s_2}^{s_1} \int_{s_2}^{s_1}
	\delta (\Delta_4^3 \vectau \cdot \Delta_4^3 \vecu) [\vecpsi] ds_3ds_4
\\
	&\quad
	+
	\frac 1 { 2\| \Delta \vecf \|_{\R^n}^2 }
	\int_{s_2}^{s_1} \int_{s_2}^{s_1} ( \Delta_4^3 \vectau \cdot \Delta_4^3 \vecu )
	\delta(ds_3ds_4)[\vecpsi]
\\
	&=
	-2\scrK(\vecf,\vecpsi) \scrN(\vectau,\vecu)
	+
	\frac 1 { 2\| \Delta \vecf \|_{\R^n}^2 }
	\int_{s_2}^{s_1} \int_{s_2}^{s_1}
	\{
	\delta (\Delta_4^3 \vectau \cdot \Delta_4^3 \vecu) [\vecphi]
\\
	&\qquad
	+
	( \Delta_4^3 \vectau \cdot \Delta_4^3 \vecu )
	(\vectau(s_3) \cdot \vecpsi^\prime(s_3) + \vectau(s_4) \cdot \vecpsi^\prime(s_4))
	\}
	ds_3ds_4.
\end{align*}
It remains to calculate
$\delta (\Delta_4^3 \vectau \cdot \Delta_4^3 \vecu) [\vecphi]$.
Since the case where
$\vecu = \vectau$
may be handled by Lemma \ref{variation lemma},
we deal with the case where
$\vecu = \vecphi^\prime$.
Then,
it follows from Lemma \ref{variation lemma} and
$\delta \vecphi^\prime(s_j)[\vecpsi] = - (\vectau(s_j) \cdot \vecpsi^\prime(s_j)) \vecphi^\prime(s_j)$
that
\begin{align*}
	&
	\delta ( \Delta_4^3 \vectau \cdot \Delta_4^3 \vecphi^\prime ) [\vecpsi]
\\
	&=
	\delta ( \Delta_4^3 \vectau ) [\vecpsi] \cdot \Delta_4^3 \vecphi^\prime
	+
	\Delta_4^3 \vectau \cdot \delta ( \Delta_4^3 \vecphi^\prime ) [\vecpsi]
\\
	&=
	\Delta_4^3 \vecpsi^\prime \cdot \Delta_4^3 \vecphi^\prime
	-
	\Delta_4^3 \{ ( \vectau \cdot \vecpsi^\prime ) \vectau \} \cdot \Delta_4^3 \vecphi^\prime
	-
	\Delta_4^3 \vectau \cdot \Delta_4^3 \{ ( \vectau \cdot \vecpsi^\prime ) \vecphi^\prime \}
\\
	&=
	\Delta_4^3 \vecphi^\prime \cdot \Delta_4^3 \vecpsi^\prime
	-
	[ ( \vectau(s_3) \cdot \vecpsi^\prime(s_3) ) \Delta_4^3 \vectau
	+ \{ \Delta_4^3 ( \vectau \cdot \vecpsi^\prime) \} \vectau(s_4) ] \cdot \Delta_4^3 \vecphi^\prime
\\
	&\quad
	-
	\Delta_4^3 \vectau \cdot [ \{ \Delta_4^3 (\vectau \cdot \vecpsi^\prime ) \} \vecphi^\prime(s_3)
	+ ( \vectau(s_4) \cdot \vecpsi^\prime(s_4) ) \Delta_4^3 \vecphi^\prime ]
\\
	&=
	\Delta_4^3 \vecphi^\prime \cdot \Delta_4^3 \vecpsi^\prime
\\
	&\quad
	-
	( \Delta_4^3 \vectau \cdot \Delta_4^3 \vecphi^\prime )
	( \vectau(s_3) \cdot \vecpsi^\prime(s_3) + \vectau(s_4) \cdot \vecpsi^\prime(s_4) )
\\
	&\quad
	- \{ \Delta_4^3 ( \vectau \cdot \vecpsi^\prime ) \}
	( \vectau(s_4) \cdot \Delta_4^3 \vecphi^\prime + \Delta_4^3 \vectau \cdot \vecphi^\prime(s_3) )
\\
	&=
	\Delta_4^3 \vecphi^\prime \cdot \Delta_4^3 \vecpsi^\prime
\\
	&\quad
	-
	( \Delta_4^3 \vectau \cdot \Delta_4^3 \vecphi^\prime )
	( \vectau(s_3) \cdot \vecpsi^\prime(s_3) + \vectau(s_4) \cdot \vecpsi^\prime(s_4) )
\\
	&\quad
	- \{ \Delta_4^3 ( \vectau \cdot \vecphi^\prime ) \} \{ \Delta_4^3 ( \vectau \cdot \vecpsi^\prime ) \}.
\end{align*}

\item Using Lemma \ref{variation lemma},
we have
\begin{align*}
	\delta \scrK(\vecf , \vecphi)[\vecpsi]
	&=
	\frac{ \delta ( \Delta \vecf \cdot \Delta \vecphi )[\vecpsi] }{ \| \Delta \vecf \|_{\R^n}^2 }
	+
	(\Delta \vecf \cdot \Delta \vecphi)
	\delta \left( \frac 1 { \| \Delta \vecf \|_{\R^n}^2 } \right) [\vecpsi]
\\
	&=
	\frac{ \Delta \vecpsi \cdot \Delta \vecphi }{ \| \Delta \vecf \|_{\R^n}^2 }
	-
	2
	\Delta \vecf \cdot \Delta \vecphi
	\frac{ \scrK(\vecf,\vecpsi) }{ \| \Delta \vecf \|_{\R^n}^2 }
\\
	&=
	\scrK(\vecphi,\vecpsi)
	-
	2 \scrK(\vecf,\vecphi) \scrK(\vecf,\vecpsi).
\end{align*}
\end{enumerate}
\end{proof}

\begin{lemma}\label{variation N}
$\delta\scrN(\vectau)[\vecphi]$
and
$\delta^2\scrN(\vectau)[\vecphi,\vecpsi]$
can be written as
\[
	\delta\scrN(\vectau)[\vecphi]
	=
	\sum_{i=1}^2 \scrR_i(\vecf)[\vecphi],
	\quad
	\delta^2\scrN(\vectau)[\vecphi,\vecpsi]
	=
	\sum_{i=1}^5 \scrS_i(\vecf)[\vecphi,\vecpsi],
\]
where
\begin{align*}
	\scrR_1 (\vecf)[\vecphi]
	&=
	-2\scrK(\vecf,\vecphi) \scrN(\vectau),
\\
	\scrR_2 (\vecf)[\vecphi]
	&=
	2\scrN(\vectau,\vecphi^\prime),
\\
	\scrS_1(\vecf)[\vecphi,\vecpsi]
	&=
	-2
	( \scrK(\vecphi,\vecpsi) -2 \scrK(\vecf,\vecphi) \scrK(\vecf,\vecpsi) )
	\scrN(\vectau),
\\
	\scrS_2(\vecf)[\vecphi,\vecpsi]
	&=
	-
	\scrK(\vecf,\vecphi) \delta \scrN(\vectau) [\vecpsi]
	-
	\scrK(\vecf,\vecpsi) \delta \scrN(\vectau) [\vecphi],
\\
	\scrS_3(\vecf)[\vecphi,\vecpsi]
	&=
	-2
	\scrK(\vecf,\vecpsi)
	\scrN(\vectau,\vecphi^\prime)
	-2
	\scrK(\vecf,\vecphi)
	\scrN(\vectau,\vecpsi^\prime),
\\
	\scrS_4(\vecf)[\vecphi,\vecpsi]
	&=
	2\scrN(\vecphi^\prime,\vecpsi^\prime),
\\
	\scrS_5(\vecf)[\vecphi,\vecpsi]
	&=
	-
	2 \scrN( (\vectau \cdot \vecphi^\prime) , (\vectau \cdot \vecpsi^\prime) ).
\end{align*}
\end{lemma}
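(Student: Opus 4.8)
The plan is to read both formulae off Lemma~\ref{variation N sublemma}, the recurring point being that all purely tangential contributions cancel.

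\emph{First variation.} Apply part~1 of Lemma~\ref{variation N sublemma} with $\vecu=\vectau$ and with the test direction called $\vecphi$. Since $\Delta_4^3\vectau\cdot\Delta_4^3\vectau=\|\Delta_4^3\vectau\|_{\R^n}^2$, substituting the formula there for $\delta(\Delta_4^3\vectau\cdot\Delta_4^3\vectau)[\vecphi]$ into the integrand makes the two terms carrying the factor $\vectau(s_3)\cdot\vecphi^\prime(s_3)+\vectau(s_4)\cdot\vecphi^\prime(s_4)$ cancel, leaving only $2\Delta_4^3\vectau\cdot\Delta_4^3\vecphi^\prime$; dividing by $2\|\Delta\vecf\|_{\R^n}^2$ and integrating gives $2\scrN(\vectau,\vecphi^\prime)=\scrR_2(\vecf)[\vecphi]$, while the prefactor $-2\scrK(\vecf,\vecphi)\scrN(\vectau)$ is $\scrR_1(\vecf)[\vecphi]$. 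Hence $\delta\scrN(\vectau)[\vecphi]=\scrR_1(\vecf)[\vecphi]+\scrR_2(\vecf)[\vecphi]$.

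\emph{Second variation.} Since $\delta^2\scrN(\vectau)[\vecphi,\vecpsi]=\delta(\delta\scrN(\vectau)[\vecphi])[\vecpsi]$, differentiate $\scrR_1+\scrR_2$ in the direction $\vecpsi$. For $\scrR_1=-2\scrK(\vecf,\vecphi)\scrN(\vectau)$ the product rule together with part~2 of Lemma~\ref{variation N sublemma} for $\delta\scrK(\vecf,\vecphi)[\vecpsi]$ and the first-variation formula for $\delta\scrN(\vectau)[\vecpsi]$ produces $\scrS_1(\vecf)[\vecphi,\vecpsi]-2\scrK(\vecf,\vecphi)\,\delta\scrN(\vectau)[\vecpsi]$. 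For $\scrR_2=2\scrN(\vectau,\vecphi^\prime)$ apply part~1 of Lemma~\ref{variation N sublemma} with $\vecu=\vecphi^\prime$; the same cancellation as before removes the terms containing $\vectau(s_3)\cdot\vecpsi^\prime(s_3)+\vectau(s_4)\cdot\vecpsi^\prime(s_4)$, so the integrand collapses to $\Delta_4^3\vecphi^\prime\cdot\Delta_4^3\vecpsi^\prime-\Delta_4^3(\vectau\cdot\vecphi^\prime)\,\Delta_4^3(\vectau\cdot\vecpsi^\prime)$, yielding $\scrS_4(\vecf)[\vecphi,\vecpsi]+\scrS_5(\vecf)[\vecphi,\vecpsi]$ after integration, together with the prefactor $-4\scrK(\vecf,\vecpsi)\scrN(\vectau,\vecphi^\prime)$. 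At this point $\delta^2\scrN(\vectau)[\vecphi,\vecpsi]=\scrS_1+\scrS_4+\scrS_5-2\scrK(\vecf,\vecphi)\,\delta\scrN(\vectau)[\vecpsi]-4\scrK(\vecf,\vecpsi)\scrN(\vectau,\vecphi^\prime)$.

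\emph{Regrouping.} It remains to identify the last two terms with $\scrS_2+\scrS_3$. Substituting $2\scrN(\vectau,\vecphi^\prime)=\delta\scrN(\vectau)[\vecphi]+2\scrK(\vecf,\vecphi)\scrN(\vectau)$ (the first-variation formula just proved) into $-4\scrK(\vecf,\vecpsi)\scrN(\vectau,\vecphi^\prime)$ and expanding each remaining $\delta\scrN(\vectau)[\cdot]$ once more, all the $\scrK(\vecf,\vecphi)\scrK(\vecf,\vecpsi)\scrN(\vectau)$ cross-terms cancel and the two terms become the symmetric combination $-\scrK(\vecf,\vecphi)\delta\scrN(\vectau)[\vecpsi]-\scrK(\vecf,\vecpsi)\delta\scrN(\vectau)[\vecphi]$ plus $-2\scrK(\vecf,\vecpsi)\scrN(\vectau,\vecphi^\prime)-2\scrK(\vecf,\vecphi)\scrN(\vectau,\vecpsi^\prime)$, that is, $\scrS_2+\scrS_3$. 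The real work here is not any single identity but the bookkeeping: the second variation generates a sizeable list of terms, and the only subtle point is this final symmetrization, recognizing that the lopsided $-2\scrK(\vecf,\vecphi)\delta\scrN(\vectau)[\vecpsi]$ must be balanced against $-4\scrK(\vecf,\vecpsi)\scrN(\vectau,\vecphi^\prime)$ to produce the manifestly symmetric $\scrS_2$. Apart from that, the two tangential cancellations are what keep the formulae as short as stated, and each should be flagged explicitly where it occurs.
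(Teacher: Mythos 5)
Your proof is correct and follows essentially the same route as the paper: both arguments hinge on Lemma \ref{variation N sublemma} applied with $\vecu=\vectau$ and $\vecu=\vecphi^\prime$, together with the cancellation of the tangential terms that you correctly flag. The only (cosmetic) difference lies in how the symmetric form of $\scrS_2+\scrS_3$ is reached: the paper symmetrizes at the outset, writing $\delta^2\scrN(\vectau)[\vecphi,\vecpsi]=\tfrac12\delta(\delta\scrN(\vectau)[\vecphi])[\vecpsi]+\tfrac12\delta(\delta\scrN(\vectau)[\vecpsi])[\vecphi]$ and averaging, whereas you compute the one-sided iterated variation and then regroup the lopsided terms $-2\scrK(\vecf,\vecphi)\,\delta\scrN(\vectau)[\vecpsi]-4\scrK(\vecf,\vecpsi)\scrN(\vectau,\vecphi^\prime)$ via the first-variation identity $2\scrN(\vectau,\vecphi^\prime)=\delta\scrN(\vectau)[\vecphi]+2\scrK(\vecf,\vecphi)\scrN(\vectau)$; both reductions check out.
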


\begin{proof}
By Lemmas \ref{variation lemma} and \ref{variation N sublemma},
we have
\begin{align*}
	\delta \scrN(\vectau)[\vecphi]
	&=
	-2\scrK(\vecf,\vecphi) \scrN(\vectau)
\\
	&\quad
	+
	\frac 1 { 2\| \Delta \vecf \|_{\R^n}^2 }
	\int_{s_2}^{s_1} \int_{s_2}^{s_1}
	\{
	\delta \| \Delta_4^3 \vectau \|_{\R^n}^2 [\vecphi]
\\
	&\qquad
	+
	\| \Delta_4^3 \vectau \|_{\R^n}^2
	(\vectau(s_3) \cdot \vecphi^\prime(s_3) + \vectau(s_4) \cdot \vecphi^\prime(s_4))
	\}
	ds_3ds_4
\\
	&=
	-2\scrK(\vecf,\vecphi) \scrN(\vectau)
	+
	\frac 1 { \| \Delta \vecf \|_{\R^n}^2 }
	\int_{s_2}^{s_1} \int_{s_2}^{s_1} \Delta_4^3 \vectau \cdot \Delta_4^3 \vecphi^\prime ds_3ds_4
\\
	&=
	-2\scrK(\vecf,\vecphi) \scrN(\vectau)
	+
	2\scrN(\vectau,\vecphi^\prime)
\\
	&=
	\sum_{i=1}^2 \scrR_i(\vecf)[\vecphi].
\end{align*}

Next we calculate
$\delta^2 \scrN(\vectau)[\vecphi,\vecpsi]$.
Firstly,
we have
\[
	\delta^2 \scrN(\vectau)[\vecphi,\vecpsi]
	=
	\frac 1 2 \delta ( \delta \scrN(\vectau)[\vecphi] ) [\vecpsi]
	+
	\frac 1 2 \delta ( \delta \scrN(\vectau)[\vecpsi] ) [\vecphi].
\]
By the symmetry with respect to
$\vecphi$
and
$\vecpsi$,
it is sufficient to calculate
$\delta ( \delta \scrN(\vectau)[\vecphi] ) [\vecpsi]$,
for which we first note that
\[
	\delta ( \delta \scrN(\vectau)[\vecphi] ) [\vecpsi]
	=
	\sum_{i=1}^2 \delta ( \scrR_i(\vecf)[\vecphi] ) [\vecpsi].
\]
Using Lemma \ref{variation N sublemma} again,
we can show that
\begin{align*}
	\delta (\scrR_1(\vecf)[\vecphi]) [\vecpsi]
	&=
	-2 \delta \scrK(\vecf,\vecphi) [\vecpsi] \scrN(\vectau)
	-2 \scrK(\vecf,\vecphi) \delta \scrN(\vectau) [\vecpsi]
\\
	&=
	-2
	\scrK(\vecphi,\vecpsi) \scrN(\vectau)
	+
	4
	\scrK(\vecf,\vecphi) \scrK(\vecf,\vecpsi)
	\scrN(\vectau)
\\
	&\quad
	-2
	\scrK(\vecf,\vecphi)
	\delta \scrN(\vectau)[\vecpsi],
\end{align*}
and
\begin{align*}
	\delta (\scrR_2(\vecf)[\vecphi]) [\vecpsi]
	&=
	2 \delta ( \scrN(\vectau,\vecphi^\prime) )[\vecpsi]
\\
	&=
	-4\scrK(\vecf,\vecpsi) \scrN(\vectau,\vecphi^\prime)
\\
	&\quad
	+
	\frac 1 { \| \Delta \vecf \|_{\R^n}^2 }
	\int_{s_2}^{s_1} \int_{s_2}^{s_1}
	[
	\Delta_4^3 \vecphi^\prime \cdot \Delta_4^3 \vecpsi^\prime
\\
	&\qquad
	-
	( \Delta_4^3 \vectau \cdot \Delta_4^3 \vecphi^\prime )
	( \vectau(s_3) \cdot \vecpsi^\prime(s_3) + \vectau(s_4) \cdot \vecpsi^\prime(s_4) )
\\
	&\qquad
	- \{ \Delta_4^3 ( \vectau \cdot \vecphi^\prime ) \} \{ \Delta_4^3 ( \vectau \cdot \vecpsi^\prime ) \}
\\
	&\qquad
	+
	( \Delta_4^3 \vectau \cdot \Delta_4^3 \vecphi^\prime )
	(\vectau(s_3) \cdot \vecpsi^\prime(s_3) + \vectau(s_4) \cdot \vecpsi^\prime(s_4))
	]
	ds_3ds_4
\\
	&=
	-4\scrK(\vecf,\vecpsi)
	\scrN(\vectau,\vecphi^\prime)
	+
	2\scrN(\vecphi^\prime,\vecpsi^\prime)
	-
	2 \scrN( (\vectau \cdot \vecphi^\prime) , (\vectau \cdot \vecpsi^\prime) ),
\end{align*}
from which the claim follows.
\end{proof}

Next,
we give expressions of
$\delta \scrM_\alpha(\vecf) [\vecphi]$
and
$\delta^2 \scrM_\alpha(\vecf) [\vecphi,\vecpsi]$
in terms of
$\scrN$,
$\delta \scrN$,
and
$\delta^2 \scrN$.

\begin{proposition}\label{variation Malpha}
$\delta \scrM_\alpha(\vecf) [\vecphi]$
and
$\delta^2 \scrM_\alpha(\vecf) [\vecphi,\vecpsi]$
can be written as
\[
	\delta \scrM_\alpha(\vecf) [\vecphi]
	=
	\sum_{i=1}^2 \scrP_i(\vecf) [\vecphi],
	\quad
	\delta^2 \scrM_\alpha(\vecf) [\vecphi,\vecpsi]
	=
	\sum_{i=1}^6 \scrQ_i(\vecf)[\vecphi,\vecpsi],
\]
where
\begin{align*}
	\scrP_1(\vecf)[\vecphi]
	&=
	\vphi_\alpha^\prime(\scrN(\vectau))
	\frac{ \delta \scrN(\vectau)[\vecphi] }{ \| \Delta \vecf \|_{\R^n}^\alpha },
\\
	\scrP_2(\vecf)[\vecphi]
	&=
	-
	\frac \alpha 2 
	\scrM_\alpha(\vecf)
	\frac{ \delta \| \Delta \vecf \|_{\R^n}^2 [\vecphi] }{ \| \Delta \vecf \|_{\R^n}^2 },
\\
	\scrQ_1(\vecf)[\vecphi,\vecpsi]
	&=
	\vphi_\alpha^\prime(\scrN(\vectau))
	\frac{ \delta^2 \scrN(\vectau)[\vecphi,\vecpsi] }{ \| \Delta \vecf \|_{\R^n}^\alpha },
\\
	\scrQ_2(\vecf)[\vecphi,\vecpsi]
	&=
	-
	\frac \alpha 2
	\vphi_\alpha^\prime(\scrN(\vectau))
	\frac{ \delta \scrN(\vectau)[\vecphi] }{ \| \Delta \vecf \|_{\R^n}^\alpha }
	\frac{ \delta \| \Delta \vecf \|_{\R^n}^2 [\vecpsi] }{ \| \Delta \vecf \|_{\R^n}^2 },
\\
	\scrQ_3(\vecf)[\vecphi,\vecpsi]
	&=
	\vphi_\alpha^{\prime\prime}(\scrN(\vectau))
	\frac{ \delta \scrN(\vectau)[\vecphi] }{ \| \Delta \vecf \|_{\R^n}^{\frac \alpha 2} }
	\frac{ \delta \scrN(\vectau)[\vecpsi] }{ \| \Delta \vecf \|_{\R^n}^{\frac \alpha 2} },
\\
	\scrQ_4(\vecf)[\vecphi,\vecpsi]
	&=
	-
	\frac \alpha 2
	\delta \scrM_\alpha(\vecf)[\vecpsi]
	\frac{ \delta \| \Delta \vecf \|_{\R^n}^2 [\vecphi] }{ \| \Delta \vecf \|_{\R^n}^2 },
\\
	\scrQ_5(\vecf)[\vecphi,\vecpsi]
	&=
	-
	\frac \alpha 2
	\scrM_\alpha(\vecf)
	\frac{ \delta^2 \| \Delta \vecf \|_{\R^n}^2 [\vecphi,\vecpsi] }{ \| \Delta \vecf \|_{\R^n}^2 },
\\
	\scrQ_6(\vecf)[\vecphi,\vecpsi]
	&=
	\frac \alpha 2
	\scrM_\alpha(\vecf)
	\frac{ \delta \| \Delta \vecf \|_{\R^n}^2 [\vecphi] }{ \| \Delta \vecf \|_{\R^n}^2 }
	\frac{ \delta \| \Delta \vecf \|_{\R^n}^2 [\vecpsi] }{ \| \Delta \vecf \|_{\R^n}^2 }.
\end{align*}
\end{proposition}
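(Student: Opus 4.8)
The plan is to read off both formulae from the product and chain rules applied to \eqref{def Malpha}, using only Lemma~\ref{variation lemma} together with the fact that the second variation may be computed by iterating the first, i.e. $\delta^2\scrM_\alpha(\vecf)[\vecphi,\vecpsi] = \delta(\delta\scrM_\alpha(\vecf)[\vecphi])[\vecpsi]$. The latter is legitimate here because $(\epsi_1,\epsi_2)\mapsto\scrM_\alpha(\vecf+\epsi_1\vecphi+\epsi_2\vecpsi)$ is smooth in a neighbourhood of the origin for each fixed $(s_1,s_2)$ off the diagonal: $\|\Delta(\vecf+\epsi_1\vecphi+\epsi_2\vecpsi)\|_{\R^n}$ stays positive for small $\epsi_i$, the quantity $\scrN$ of any vector field is nonnegative, and $\vphi_\alpha$ is smooth on $(-1,\infty)$, so we never leave the smooth range. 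The first variation then requires nothing new: differentiating \eqref{def Malpha} by the product and chain rules gives \eqref{delta Malpha}, and rewriting $\vphi_\alpha(\scrN(\vectau))/\|\Delta\vecf\|_{\R^n}^{\alpha+2} = \scrM_\alpha(\vecf)/\|\Delta\vecf\|_{\R^n}^2$ via \eqref{def Malpha} identifies the two summands as $\scrP_1(\vecf)[\vecphi]$ and $\scrP_2(\vecf)[\vecphi]$.

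For the second variation I would compute $\delta(\scrP_1(\vecf)[\vecphi])[\vecpsi]$ and $\delta(\scrP_2(\vecf)[\vecphi])[\vecpsi]$ separately. Viewing $\scrP_1(\vecf)[\vecphi]$ as the triple product $\vphi_\alpha^\prime(\scrN(\vectau))\cdot\delta\scrN(\vectau)[\vecphi]\cdot\|\Delta\vecf\|_{\R^n}^{-\alpha}$, Leibniz's rule yields exactly three terms: the derivative of the first factor contributes $\vphi_\alpha^{\prime\prime}(\scrN(\vectau))\,\delta\scrN(\vectau)[\vecpsi]$ times the remaining factors, which is $\scrQ_3$; the derivative of the second contributes $\vphi_\alpha^\prime(\scrN(\vectau))\,\delta^2\scrN(\vectau)[\vecphi,\vecpsi]/\|\Delta\vecf\|_{\R^n}^\alpha = \scrQ_1$ (here using $\delta(\delta\scrN(\vectau)[\vecphi])[\vecpsi] = \delta^2\scrN(\vectau)[\vecphi,\vecpsi]$, which is precisely what Lemma~\ref{variation N} computes); and the derivative of the third, after $\delta(\|\Delta\vecf\|_{\R^n}^{-\alpha})[\vecpsi] = -\tfrac\alpha2\|\Delta\vecf\|_{\R^n}^{-\alpha}\,\delta\|\Delta\vecf\|_{\R^n}^2[\vecpsi]/\|\Delta\vecf\|_{\R^n}^2$ (a consequence of Lemma~\ref{variation lemma}.3--4), contributes $\scrQ_2$. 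For $\scrP_2(\vecf)[\vecphi] = -\tfrac\alpha2\,\scrM_\alpha(\vecf)\cdot\delta\|\Delta\vecf\|_{\R^n}^2[\vecphi]/\|\Delta\vecf\|_{\R^n}^2$, differentiating the factor $\scrM_\alpha(\vecf)$ gives $\scrQ_4$, while differentiating the ratio $\delta\|\Delta\vecf\|_{\R^n}^2[\vecphi]/\|\Delta\vecf\|_{\R^n}^2$ — invoking Lemma~\ref{variation lemma}.5 for $\delta^2\|\Delta\vecf\|_{\R^n}^2[\vecphi,\vecpsi]$ and Lemma~\ref{variation lemma}.3--4 for $\delta(\|\Delta\vecf\|_{\R^n}^{-2})[\vecpsi]$ — gives $\scrQ_5+\scrQ_6$. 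Summing the six pieces gives the asserted expression for $\delta^2\scrM_\alpha(\vecf)[\vecphi,\vecpsi]$.

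I do not anticipate any serious obstacle; the computation is a careful but routine application of Leibniz's rule. The two points that warrant attention are (i) justifying the interchange $\delta^2 = \delta\circ\delta$, handled by the smoothness remark above, and (ii) the bookkeeping of powers of $\|\Delta\vecf\|_{\R^n}$ when passing between $\delta\|\Delta\vecf\|_{\R^n}^2$, $\delta(\|\Delta\vecf\|_{\R^n}^{-\alpha})$, $\delta(\|\Delta\vecf\|_{\R^n}^{-2})$ and $\scrK(\vecf,\cdot)$. It is worth noting that the individual $\scrQ_i$ are not separately symmetric in $\vecphi$ and $\vecpsi$ — this merely reflects the order in which the differentiations were performed — whereas their sum is necessarily symmetric; expanding the $\delta\scrM_\alpha(\vecf)[\vecpsi]$ occurring in $\scrQ_4$ by means of \eqref{delta Malpha} makes the symmetry manifest and provides a convenient consistency check.
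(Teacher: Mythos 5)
Your proposal is correct and follows essentially the same route as the paper: derive the first variation from \eqref{def Malpha} and \eqref{delta Malpha}, then obtain the second variation by applying $\delta[\cdot][\vecpsi]$ to $\scrP_1$ and $\scrP_2$ via Leibniz's rule, yielding $\scrQ_1+\scrQ_2+\scrQ_3$ and $\scrQ_4+\scrQ_5+\scrQ_6$ respectively. Your additional remarks on justifying $\delta^2=\delta\circ\delta$ and on the symmetry check via $\scrQ_4$ are sensible but not needed beyond what the paper records.
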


\begin{proof}
The assertion for
$\delta \scrM_\alpha(\vecf)$
follows immediately from \eqref{def Malpha} and \eqref{delta Malpha}.

Regarding
$\delta^2 \scrM_\alpha(\vecf)$,
we have
\[
	\delta^2\scrM_\alpha(\vecf)[\vecphi,\vecpsi]
	=
	\delta ( \delta\scrM_\alpha(\vecf)[\vecphi] )[\vecpsi]
	=
	\sum_{i=1}^2
	\delta (\scrP_i(\vecf)[\vecphi]) [\vecpsi],
\]
and
\begin{align*}
	\delta (\scrP_1(\vecf)[\vecphi]) [\vecpsi]
	&=
	\vphi_\alpha^{\prime\prime}(\scrN(\vectau)) \delta\scrN(\vectau)[\vecpsi]
	\frac{ \delta \scrN(\vectau)[\vecphi] }{ \| \Delta \vecf \|_{\R^n}^\alpha }
	+
	\vphi_\alpha^\prime(\scrN(\vectau))
	\frac{ \delta^2 \scrN(\vectau)[\vecphi,\vecpsi] }{ \| \Delta \vecf \|_{\R^n}^\alpha }
\\
	&\quad
	-
	\frac \alpha 2
	\vphi_\alpha^\prime(\scrN(\vectau))
	\frac{ \delta \scrN(\vectau)[\vecphi] \delta\| \Delta \vecf \|_{\R^n}^2[\vecpsi] }
	{ \| \Delta \vecf \|_{\R^n}^{\alpha+2} }
\\
	&=
	\scrQ_3(\vecf)[\vecphi,\vecpsi] + \scrQ_1(\vecf)[\vecphi,\vecpsi] + \scrQ_2(\vecf)[\vecphi,\vecpsi],
\\
	\delta (\scrP_2(\vecf)[\vecphi]) [\vecpsi]
	&=
	-
	\frac \alpha 2
	\delta\scrM_\alpha(\vecf)[\vecpsi]
	\frac{ \delta \| \Delta \vecf \|_{\R^n}^2 [\vecphi] }{ \| \Delta \vecf \|_{\R^n}^2 }
	-
	\frac \alpha 2
	\scrM_\alpha(\vecf)
	\frac{ \delta^2 \| \Delta \vecf \|_{\R^n}^2 [\vecphi,\vecpsi] }{ \| \Delta \vecf \|_{\R^n}^2 }
\\
	&\quad
	+
	\frac \alpha 2
	\scrM_\alpha(\vecf)
	\frac{ \delta \| \Delta \vecf \|_{\R^n}^2 [\vecphi] \delta \| \Delta \vecf \|_{\R^n}^2 [\vecpsi] }
	{ \| \Delta \vecf \|_{\R^n}^4 }
\\
	&=
	\scrQ_4(\vecf)[\vecphi,\vecpsi] + \scrQ_5(\vecf)[\vecphi,\vecpsi] + \scrQ_6(\vecf)[\vecphi,\vecpsi].
\end{align*}
\end{proof}

Plugging Lemma \ref{variation N} and Proposition \ref{variation Malpha} into Theorem \ref{variation},
we obtain the first and second variational formulae of the
$(\alpha,p)$-O'Hara energies.
Since the expressions are rather lengthy,
we omit their explicit formulae here.

%
%

\section{Estimates of the first and second variational formulae}\label{estimates}

\subsection{Strategy}

Let
$\vecf$,
$\vecphi$,
$\vecpsi \in W^{1+\sigma , 2p}(\R/\calL\Z , \R^n) \cap W^{1,\infty}(\R/\calL\Z , \R^n)$,
and assume that
$\vecf$
is bi-Lipschitz.
By \cite{B12},
we already know the estimate
\begin{equation}\label{Malpha Lp}
	\| \scrM_\alpha(\vecf) \|_{L^p} \leq C.
\end{equation}
Combining Theorem \ref{variation} and H\"{o}lder's inequality,
if
$p>1$,
we have
\begin{align*}
	\| \scrG_1(\vecf)[\vecphi] \|_{L^1}
	&\leq
	p \| \scrM_\alpha(\vecf) \|_{L^p}^{p-1} \| \delta \scrM_\alpha(\vecf)[\vecphi] \|_{L^p},
\\
	\| \scrG_2(\vecf)[\vecphi] \|_{L^1}
	&\leq
	2 \| \scrM_\alpha(\vecf) \|_{L^p}^p \| \vecphi^\prime \|_{L^\infty},
\\
	\| \scrH_1(\vecf)[\vecphi,\vecpsi] \|_{L^1}
	&\leq
	p \| \scrM_\alpha(\vecf) \|_{L^p}^{p-1} \| \delta^2 \scrM_\alpha(\vecf) [\vecphi,\vecpsi] \|_{L^p},
\\
	\| \scrH_2(\vecf)[\vecphi,\vecpsi] \|_{L^1}
	&\leq
	p(p-1) \| \scrM_\alpha(\vecf) \|_{L^p}^{p-2}
	\| \delta \scrM_\alpha(\vecf) [\vecphi] \|_{L^p} \| \delta \scrM_\alpha(\vecf) [\vecpsi] \|_{L^p},
\\
	\| \scrH_3(\vecf)[\vecphi,\vecpsi] \|_{L^1}
	&\leq
	2 \| \scrG_1(\vecf)[\vecphi] \|_{L^1} \| \vecpsi^\prime \|_{L^\infty},
\\
	\| \scrH_4(\vecf)[\vecphi,\vecpsi] \|_{L^1}
	&\leq
	2 \| \scrG_1(\vecf)[\vecpsi] \|_{L^1} \| \vecphi^\prime \|_{L^\infty},
\\
	\| \scrH_5(\vecf)[\vecphi,\vecpsi] \|_{L^1}
	&\leq
	6 \| \scrM_\alpha(\vecf) \|_{L^p}^p
	\| \vecphi^\prime \|_{L^\infty} \| \vecpsi^\prime \|_{L^\infty},
\\
	\| \scrH_6(\vecf)[\vecphi,\vecpsi] \|_{L^1}
	&\leq
	4 \| \scrM_\alpha(\vecf) \|_{L^p}^p
	\| \vecphi^\prime \|_{L^\infty} \| \vecpsi^\prime \|_{L^\infty}.
\end{align*}
Hence,
if there exists
$C=C(\vecf)>0$
such that
\begin{equation}\label{what to prove Malpha}
\begin{split}
	\| \delta \scrM_\alpha(\vecf)[\vecphi] \|_{L^p}
	&\leq C \| \vecphi^\prime \|_{W^{\sigma,2p} \cap L^\infty},
\\
	\| \delta^2 \scrM_\alpha(\vecf)[\vecphi,\vecpsi] \|_{L^p}
	&\leq
	C \| \vecphi^\prime \|_{W^{\sigma,2p} \cap L^\infty}
	\| \vecpsi^\prime \|_{W^{\sigma,2p} \cap L^\infty},
\end{split}
\end{equation}
then the desired
$L^1$-estimates for
$\scrG_{(\alpha,p)}(\vecf)$
and
$\scrH_{(\alpha,p)}(\vecf)$
follow.

Next we observe that since
$\| \Delta \vecf \|_{\R^n} \leq \scrD(\vecf)$
holds,
and using the bi-Lipschitz estimate,
we deduce that there exists
$\tilde{C}=\tilde{C}(\vecf)>0$
such that
$0 \leq \scrN(\vectau) \leq \tilde{C}$.
Hence,
we have
\[
	| \vphi_\alpha^{(j)} (\scrN(\vectau)) | \leq
	\max_{t \in [0,\tilde{C}]} | \vphi_\alpha^{(j)} (t) | < \infty
\]
for
$j=0,1,2$.
By Proposition \ref{variation Malpha},
H\"{o}lder's inequality,
and \eqref{Malpha Lp},
we can show \eqref{what to prove Malpha} if there exists
$C=C(\vecf)>0$
such that
\begin{align*}
	\left\|
	\frac{ \delta \scrN(\vectau)[\vecphi] }{ \| \Delta \vecf \|_{\R^n}^\alpha }
	\right\|_{L^p}
	&\leq
	C \| \vecphi^\prime \|_{W^{\sigma,2p} \cap L^\infty},
\\
	\left\|
	\frac{ \delta^2 \scrN(\vectau)[\vecphi,\vecpsi] }{ \| \Delta \vecf \|_{\R^n}^\alpha }
	\right\|_{L^p}
	&\leq
	C \| \vecphi^\prime \|_{W^{\sigma,2p} \cap L^\infty}
	\| \vecpsi^\prime \|_{W^{\sigma,2p} \cap L^\infty},
\\
	\left\|
	\frac{ \delta\| \Delta \vecf \|_{\R^d}^2[\vecphi] }{ \| \Delta \vecf \|_{\R^n}^2 }
	\right\|_{L^\infty}
	&\leq
	C \| \vecphi^\prime \|_{L^\infty},
\\
	\left\|
	\frac{ \delta^2\| \Delta \vecf \|_{\R^d}^2[\vecphi,\vecpsi] }{ \| \Delta \vecf \|_{\R^n}^2 }
	\right\|_{L^\infty}
	&\leq
	C \| \vecphi^\prime \|_{L^\infty} \| \vecpsi^\prime \|_{L^\infty}.
\end{align*}
Similarly,
to obtain the desired
$L^\infty$-estimates and continuity,
it suffices to consider the corresponding properties of
\begin{equation}\label{what to prove}
	\frac{ \delta \scrN(\vectau)[\vecphi] }{ \| \Delta \vecf \|_{\R^n}^\alpha },
	\quad
	\frac{ \delta^2 \scrN(\vectau)[\vecphi,\vecpsi] }{ \| \Delta \vecf \|_{\R^n}^\alpha },
	\quad
	\frac{ \delta\| \Delta \vecf \|_{\R^d}^2[\vecphi] }{ \| \Delta \vecf \|_{\R^n}^2 },
	\quad
	\frac{ \delta^2\| \Delta \vecf \|_{\R^d}^2[\vecphi,\vecpsi] }{ \| \Delta \vecf \|_{\R^n}^2 }.
\end{equation}

\subsection{Estimates and continuity of the quantities in \eqref{what to prove}}

As we can see from Lemma \ref{variation N},
$\delta \scrN(\vectau)[\vecphi]$
and
$\delta \scrN(\vectau)[\vecphi,\vecpsi]$
may be expressed in terms of
$\scrN(\vecu,\vecv)$
and
$\scrK(\vecu,\vecv)$.
First,
we discuss
$\scrN(\vecu,\vecv)$.

\begin{lemma}\label{estimates N}
Assume that
$\vecf$
is bi-Lipschitz.
Then,
the following properties hold.
\begin{enumerate}\renewcommand{\labelenumi}{\upshape\textrm{\theenumi.}}
\item
Let
$\alpha \in (0,\infty)$
and
$p \in [1,\infty)$
satisfy
$2 \leq \alpha p < 2p+1$.
If
$\vecu$,
$\vecv \in W^{\sigma,2p}(\R/\calL\Z , \R^d)$,
then there exists
$C=C(\vecf)>0$
such that
\[
	\left\|
	\frac{ \scrN(\vecu,\vecv) }{ \| \Delta \vecf \|_{\R^n}^\alpha }
	\right\|_{L^p}
	\leq
	C [\vecu]_{W^{\sigma,2p}} [\vecv]_{W^{\sigma,2p}}.
\]
\item
Let
$0 < \beta \leq 1$.
If
$\vecu$,
$\vecv \in C^{0,\beta}(\R/\calL\Z , \R^d)$,
then there exists
$C=C(\vecf)>0$
such that
\[
	\left\|
	\scrD(\vecf)^{\alpha-2\beta}
	\frac{ \scrN(\vecu,\vecv) }{ \| \Delta \vecf \|_{\R^n}^\alpha }
	\right\|_{L^\infty}
	\leq
	C [\vecu]_{C^{0,\beta}} [\vecv]_{C^{0,\beta}}.
\]
\item
Let
$0 < \beta \leq 1$.
If
$\vecu$,
$\vecv \in X^\beta(\R/\calL\Z , \R^d)$,
then for
$s \in \R/\calL\Z$,
\[
	\lim_{(s_1,s_2) \to (s,s)}
	|\Delta s|^{\alpha-2\beta}
	\frac{ \scrN(\vecu,\vecv) }{ \| \Delta \vecf \|_{\R^n}^\alpha }
	=
	\left\{
	\begin{array}{ll}
		0 & (0<\beta<1),
	\vspace{5pt}\\
		\displaystyle \frac 1 {12} \vecu^\prime(s) \cdot \vecv^\prime(s) & (\beta=1)
	\end{array}
	\right.
\]
holds.
\end{enumerate}
\end{lemma}

\begin{proof}
\begin{enumerate}
\item
By H\"{o}lder's inequality and the bi-Lipschitz continuity of
$\vecf$,
we have
\begin{align*}
	\left\|
	\frac{ \scrN(\vecu,\vecv) }{ \| \Delta \vecf \|_{\R^n}^\alpha }
	\right\|_{L^p}^p
	&=
	\frac 1 {2^p}
	\int_{\R/\calL\Z} \int_{s_2-\frac \calL 2}^{s_2+\frac \calL 2}
	\frac 1 { \| \Delta \vecf \|_{\R^n}^{\alpha p+2p} }
	\left|
	\int_{s_2}^{s_1} \int_{s_2}^{s_1}
	\Delta_4^3 \vecu \cdot \Delta_4^3 \vecv
	ds_3ds_4
	\right|^p
	ds_1ds_2
\\
	&\leq
	C
	\int_{\R/\calL\Z} \int_{s_2-\frac \calL 2}^{s_2+\frac \calL 2}
	\frac 1 { |\Delta s|^{\alpha p+2} }
	\int_{s_2}^{s_1} \int_{s_2}^{s_1}
	\| \Delta_4^3 \vecu \|_{\R^d}^p \| \Delta_4^3 \vecv \|_{\R^d}^p
	ds_3ds_4ds_1ds_2
\\
	&=
	(\dagger).
\end{align*}
We change variables
\[
	t_1=s_1-s_2,
	\quad
	t_2=s_2,
	\quad
	s_3=t_2+t_1t_3,
	\quad
	s_4=t_2+t_1t_4
\]
in
$(\dagger)$.
Then,
we obtain
\begin{multline*}
	(\dagger)
	=
	C
	\int_{\R/\calL\Z} \int_{-\frac \calL 2}^{\frac \calL 2}
	\frac 1 { |t_1|^{\alpha p} }
\\
	\times
	\int_0^1 \int_0^1
	\| \vecu(t_2+t_1t_3) - \vecu(t_2+t_1t_4) \|_{\R^d}^p
	\| \vecv(t_2+t_1t_3) - \vecv(t_2+t_1t_4) \|_{\R^d}^p
	dt_3dt_4dt_1dt_2
\\
	=
	(\ddagger).
\end{multline*}
We use Fubini's theorem and change variables
\[
	w_1 = (t_3-t_4) t_1,
	\quad
	w_2 = t_2+t_1t_4
\]
in
$(\ddagger)$.
Then,
we obtain
\begin{align*}
	(\ddagger)
	&=
	C
	\int_0^1 \int_0^1
	\int_{-\frac \calL 2}^{\frac \calL 2} \int_{\R/\calL\Z}
	\frac 1 { |t_1|^{\alpha p} }
\\
	&\qquad
	\times
	\| \vecu(t_2+t_1t_3) - \vecu(t_2+t_1t_4) \|_{\R^d}^p
	\| \vecv(t_2+t_1t_3) - \vecv(t_2+t_1t_4) \|_{\R^d}^p
	dt_2dt_1dt_3dt_4
\\
	&=
	C
	\int_0^1 \int_0^1
	\int_{-\frac \calL 2 |t_3-t_4|}^{\frac \calL 2 |t_3-t_4|} \int_{\R/\calL\Z}
	\frac 1 { |w_1|^{\alpha p} }
	|t_3-t_4|^{\alpha p-1}
\\
	&\qquad
	\times
	\| \vecu(w_1+w_2) - \vecu(w_2) \|_{\R^d}^p
	\| \vecv(w_1+w_2) - \vecv(w_2) \|_{\R^d}^p
	dw_1dw_2dt_3dt_4
\\
	&\leq
	C
	\int_{\R/\calL\Z} \int_{-\frac \calL 2}^{\frac \calL 2}
	\frac{ \| \vecu(w_1+w_2) - \vecu(w_2) \|_{\R^d}^p }{ |w_1|^{\frac{\alpha p} 2} }
	\frac{ \| \vecv(w_1+w_2) - \vecv(w_2) \|_{\R^d}^p }{ |w_1|^{\frac{\alpha p} 2} }
	dw_1dw_2
\\
	&\leq
	C [\vecu]_{W^{\sigma,2p}}^p [\vecv]_{W^{\sigma,2p}}^p,
\end{align*}
by H\"{o}lder's inequality,
and the claim holds.
\item
Without loss of generality,
we assume that
$s_1$,
$s_2 \in \R/\calL\Z$
satisfy
$\displaystyle 0 < |\Delta s| < \calL/2$.
Then,
we have
\begin{align*}
	\left|
	|\Delta s|^{\alpha-2\beta}
	\frac{ \scrN(\vecu,\vecv) }{ \| \Delta \vecf \|_{\R^n}^\alpha }
	\right|
	&\leq
	C
	|\Delta s|^{-2\beta-2}
	\int_{s_2}^{s_1} \int_{s_2}^{s_1}
	\| \Delta_4^3 \vecu \|_{\R^d} \| \Delta_4^3 \vecv \|_{\R^d}
	ds_3ds_4
\\
	&\leq
	C
	|\Delta s|^{-2}
	\int_{s_2}^{s_1} \int_{s_2}^{s_1}
	\frac{ \| \Delta_4^3 \vecu \|_{\R^d} }{ |\Delta_4^3 s|^\beta }
	\frac{ \| \Delta_4^3 \vecv \|_{\R^d} }{ |\Delta_4^3 s|^\beta }
	ds_3ds_4
\\
	&\leq
	C [\vecu^\prime]_{C^{0,\beta}} [\vecv^\prime]_{C^{0,\beta}}.
\end{align*}
\item
First,
we consider the case where
$0<\beta<1$,
i.e.\ $\vecu$,
$\vecv \in h^{0,\beta}(\R/\calL\Z , \R^d)$.
As is well known,
the little H\"{o}lder space
$h^{0,\beta}$
is characterized as
\[
	h^{0,\beta}(\R/\calL\Z , \R^d)
	=
	\left\{
	\vecu \in C^{0,\beta}(\R/\calL\Z , \R^d)
	\,\left|\,
	\lim_{R \to +0} [\vecu]_{C^{0,\beta},R} = 0
	\right.
	\right\},
\]
where
\[
	[\vecu]_{C^{0,\beta},R}
	=
	\sup_{\substack{s_1,s_2 \in \R/\calL\Z\\0<|\Delta s|<R}}
	\frac{ \| \Delta \vecu \|_{\R^d} }{ |\Delta s|^\beta }.
\]
For
$R>0$,
let
$s_1$,
$s_2 \in \R/\calL\Z$
satisfy
$0<|\Delta s|<R$.
Then,
we have
\begin{equation}\label{Req}
	\left|
	|\Delta s|^{\alpha-2\beta}
	\frac{ \scrN(\vecu,\vecv) }{ \| \Delta \vecf \|_{\R^n}^\alpha }
	\right|
	\leq
	C [\vecu]_{C^{0,\beta},R} [\vecv]_{C^{0,\beta},R}.
\end{equation}
Taking
$\limsup$
on left-hand side in \eqref{Req},
we have
\[
	\limsup_{(s_1,s_2) \to (s,s)}
	\left|
	|\Delta s|^{\alpha-2\beta}
	\frac{ \scrN(\vecu,\vecv) }{ \| \Delta \vecf \|_{\R^n}^\alpha }
	\right|
	\leq
	C [\vecu]_{C^{0,\beta},R} [\vecv]_{C^{0,\beta},R}.
\]
Taking
$R \to +0$
on right-hand side,
we obtain
\[
	\lim_{(s_1,s_2) \to (s,s)}
	|\Delta s|^{\alpha-2\beta}
	\frac{ \scrN(\vecu,\vecv) }{ \| \Delta \vecf \|_{\R^n}^\alpha }
	= 0.
\]
Next,
we assume
$\vecu$,
$\vecv \in C^1$.
If
$s_5$,
$s_6 \in \R/\calL\Z$
are such that
$\| (s_5,s_6) - (s,s) \|_{\R^2}$
is sufficiently small,
we can take
$\epsi>0$
arbitrarily such that
\[
	\| \vecu^\prime(s_5) - \vecu^\prime(s) \|_{\R^d}
	\leq
	\frac \epsi { 2( \| \vecv^\prime \|_{L^\infty} + 1) },
	\quad
	\| \vecv^\prime(s_6) - \vecv^\prime(s) \|_{\R^d}
	\leq
	\frac \epsi { 2( \| \vecu^\prime \|_{L^\infty} + 1) }.
\]
Then,
using the fact that
\[
	\int_{s_2}^{s_1} \int_{s_2}^{s_1} \int_{s_4}^{s_3} \int_{s_4}^{s_3} ds_5ds_6ds_3ds_4 = \frac 1 6 |\Delta s|^2,
\]
we have
\begin{align*}
	&
	\left|
	\frac 1 { |\Delta s|^4 }
	\int_{s_2}^{s_1} \int_{s_2}^{s_1} \int_{s_4}^{s_3} \int_{s_4}^{s_3}
	\vecu^\prime(s_5) \cdot \vecv^\prime(s_6)
	ds_5ds_6ds_3ds_4
	-
	\frac 1 6 \vecu^\prime(s) \cdot \vecv^\prime(s)
	\right|
\\
	&\leq
	\frac 1 { |\Delta s|^4 }
	\int_{s_2}^{s_1} \int_{s_2}^{s_1} \int_{s_4}^{s_3} \int_{s_4}^{s_3}
	| \vecu^\prime(s_5) \cdot \vecv^\prime(s_6) - \vecu^\prime(s) \cdot \vecv^\prime(s) |
	ds_5ds_6ds_3ds_4
\\
	&\leq
	\frac 1 { |\Delta s|^4 }
	\int_{s_2}^{s_1} \int_{s_2}^{s_1} \int_{s_4}^{s_3} \int_{s_4}^{s_3}
	( \| \vecu^\prime(s_5) \|_{\R^d} \| \vecv^\prime(s_6) - \vecv^\prime(s) \|_{\R^d}
\\
	&\qquad
	+
	\| \vecv^\prime(s) \|_{\R^d} \| \vecu^\prime(s_5) - \vecu^\prime(s) \|_{\R^d} )
	ds_5ds_6ds_3ds_4
\\
	&\leq
	\epsi.
\end{align*}
Hence,
we have
\begin{multline*}
	\lim_{(s_1,s_2) \to (s,s)}
	\frac 1 { |\Delta s|^4 }
	\int_{s_2}^{s_1} \int_{s_2}^{s_1} \int_{s_4}^{s_3} \int_{s_4}^{s_3}
	\vecu^\prime(s_5) \cdot \vecv^\prime(s_6)
	ds_5ds_6ds_3ds_4
\\
	=
	\frac 1 6 \vecu^\prime(s) \cdot \vecv^\prime(s).
\end{multline*}
Using this and
\[
	\lim_{(s_1,s_2) \to (s,s)}
	\frac{ |\Delta s| }{ \| \Delta \vecf \|_{\R^n} }
	=
	\frac 1 { \| \vectau(s) \|_{\R^n} }
	= 1,
\]
we obtain
\begin{align*}
	&
	|\Delta s|^{\alpha-2} \frac{ \scrN(\vecu,\vecv) }{ \| \Delta \vecf \|_{\R^n}^\alpha }
\\
	&=
	\frac 1 2
	\frac{ |\Delta s|^{\alpha+2} }{ \| \Delta \vecf \|_{\R^n}^{\alpha+2} }
	\frac 1 { |\Delta s|^4 }
	\int_{s_2}^{s_1} \int_{s_2}^{s_1} \int_{s_4}^{s_3} \int_{s_4}^{s_3}
	\vecu^\prime(s_5) \cdot \vecv^\prime(s_6)
	ds_5ds_6ds_3ds_4
\\
	&\to
	\frac 1 {12} \vecu^\prime(s) \cdot \vecv^\prime(s)
	\quad
	\text{as}\ (s_1,s_2) \to (s,s).
\end{align*}
\end{enumerate}
\end{proof}

Since the function
$\scrK(\vecu,\vecv)$
fulfills
\[
	| \scrK(\vecu,\vecv) | \leq C \| \vecu^\prime \|_{L^\infty} \| \vecv^\prime \|_{L^\infty},
	\quad
	\lim_{(s_1,s_2) \to (s,s)} \scrK(\vecu,\vecv)
	=
	\vecu^\prime(s) \cdot \vecv^\prime(s),
\]
we obtain estimates and continuity of
$\displaystyle \frac{ \delta \| \Delta \vecf \|_{\R^n}^2 [\vecphi] }{ \| \Delta \vecf \|_{\R^n}^2 }$
and
$\displaystyle \frac{ \delta^2 \| \Delta \vecf \|_{\R^n}^2 [\vecphi,\vecpsi] }{ \| \Delta \vecf \|_{\R^n}^2 }$
from Lemma \ref{variation lemma} as follows.

\begin{lemma}\label{estimates deltaDelta}
\begin{enumerate}\renewcommand{\labelenumi}{\upshape\textrm{\theenumi.}}
\item
Assume that
$\vecf$,
$\vecphi$,
$\vecpsi \in W^{1,\infty}(\R/\calL\Z,\R^n)$
and
$\vecf$
is bi-Lipschitz.
Then,
there exists
$C=C(\vecf)>0$
such that
\[
	\left\| \frac{ \delta \| \Delta \vecf \|_{\R^n}^2[\vecphi] }{ \| \Delta \vecf \|_{\R^n}^2 } \right\|_{L^\infty}
	\leq
	C \| \vecphi^\prime \|_{L^\infty},
	\quad
	\left\|
	\frac{ \delta^2 \| \Delta \vecf \|_{\R^n}^2[\vecphi,\vecpsi] }{ \| \Delta \vecf \|_{\R^n}^2 }
	\right\|_{L^\infty}
	\leq
	C \| \vecphi^\prime \|_{L^\infty} \| \vecpsi^\prime \|_{L^\infty}.
\]
\item
If
$\vecf$,
$\vecphi$,
$\vecpsi \in C^1(\R/\calL\Z,\R^n)$,
then it follows that
\begin{align*}
	\lim_{(s_1,s_2) \to (s,s)}
	\frac{ \delta \| \Delta \vecf \|_{\R^n}^2[\vecphi] }{ \| \Delta \vecf \|_{\R^n}^2 }
	&=
	\vectau(s) \cdot \vecphi^\prime(s),
\\
	\lim_{(s_1,s_2) \to (s,s)}
	\frac{ \delta^2 \| \Delta \vecf \|_{\R^n}^2[\vecphi,\vecpsi] }{ \| \Delta \vecf \|_{\R^n}^2 }
	&=
	\vecphi^\prime(s) \cdot \vecpsi^\prime(s)
\end{align*}
for
$s \in \R/\calL\Z$.
\end{enumerate}
\end{lemma}

To deal with
$\scrS_5(\vecf)$,
we need estimates for
$\vectau \cdot \vecphi^\prime$.
The proof is easy,
therefore we omit the details.

\begin{lemma}\label{estimates inner product}
Assume that
$\vecf$
is bi-Lipschitz.
Then,
the following properties hold.
\begin{enumerate}\renewcommand{\labelenumi}{\upshape\textrm{\theenumi.}}
\item
Let
$\alpha \in (0,\infty)$
and
$p \in [1,\infty)$
satisfy
$2 \leq \alpha p < 2p+1$.
If
$\vecf$,
$\vecphi \in W^{1+\sigma,2p}(\R/\calL\Z , \R^n) \cap W^{1,\infty}(\R/\calL\Z , \R^d)$,
then there exists
$C=C(\vecf)>0$
such that
\[
	[(\vectau \cdot \vecphi^\prime)]_{W^{\sigma,2p}}
	\leq
	C
	( \| \vectau \|_{L^\infty} [\vecphi^\prime]_{W^{\sigma,2p}}
	+ [\vectau]_{W^{\sigma,2p}} \| \vecphi^\prime \|_{L^\infty} ).
\]
\item
Let
$0 < \beta \leq 1$.
If
$\vecf$,
$\vecphi \in C^{1,\beta}(\R/\calL\Z , \R^n)$,
then it holds that
\[
	[(\vectau \cdot \vecphi^\prime)]_{C^{0,\beta}}
	\leq
	\| \vectau \|_{L^\infty} [\vecphi^\prime]_{C^{0,\beta}} + [\vectau]_{C^{0,\beta}} \| \vecphi^\prime \|_{L^\infty}.
\]
\item
Let
$0 < \beta \leq 1$.
If
$\vectau$,
$\vecphi^\prime \in X^\beta(\R/\calL\Z , \R^n)$,
then
$(\vectau \cdot \vecphi^\prime) \in X^\beta(\R/\calL\Z , \R)$.
Moreover,
if
$\vecf$,
$\vecphi \in C^2(\R/\calL\Z , \R^n)$,
then it holds that
\[
	\frac d {ds} (\vectau(s) \cdot \vecphi^\prime(s))
	=
	\veckappa(s) \cdot \vecphi^\prime(s) + \vectau(s) \cdot \vecphi^{\prime\prime}(s)
\]
for
$s \in \R/\calL\Z$.
\end{enumerate}
\end{lemma}

\subsection{Proof of Theorem \ref{mainthm}}

In this subsection we complete the proof of Theorem \ref{mainthm} combining the facts in previous subsection.
First,
we show the
$L^1$-estimates of the first and second variational formulae for
$\scrM_{(\alpha,p)}(\vecf)$.
Using the expression of the first and second variational formulae for
$\scrN(\vectau)$
in Lemma \ref{variation N},
it follows that
\begin{align*}
	\left\| \frac{ \scrR_1(\vecf)[\vecphi] }{ \| \Delta \vecf \|_{\R^n}^\alpha } \right\|_{L^p}
	&\leq
	C \| \vecphi^\prime \|_{L^\infty},
\\
	\left\| \frac{ \scrR_2(\vecf)[\vecphi] }{ \| \Delta \vecf \|_{\R^n}^\alpha } \right\|_{L^p}
	&\leq
	C [ \vecphi^\prime ]_{W^{\sigma,2p}},
\\
	\left\| \frac{ \scrS_1(\vecf)[\vecphi,\vecpsi] }
	{ \| \Delta \vecf \|_{\R^n}^\alpha } \right\|_{L^p}
	&\leq
	C \| \vecphi^\prime \|_{L^\infty} \| \vecpsi^\prime \|_{L^\infty},
\\
	\left\| \frac{ \scrS_2(\vecf)[\vecphi,\vecpsi] }
	{ \| \Delta \vecf \|_{\R^n}^\alpha } \right\|_{L^p}
	&\leq
	C
	\{
	\| \vecphi^\prime \|_{L^\infty}
	( \| \vecpsi^\prime \|_{L^\infty} + [\vecpsi^\prime]_{W^{\sigma,2p}} )
	+
	\| \vecpsi^\prime \|_{L^\infty}
	( \| \vecphi^\prime \|_{L^\infty} + [\vecphi^\prime]_{W^{\sigma,2p}} )
	\},
\\
	\left\| \frac{ \scrS_3(\vecf)[\vecphi,\vecpsi] }
	{ \| \Delta \vecf \|_{\R^n}^\alpha } \right\|_{L^p}
	&\leq
	C ( \| \vecpsi^\prime \|_{L^\infty} [ \vecphi^\prime ]_{W^{\sigma,2p}}
	+
	\| \vecphi^\prime \|_{L^\infty} [ \vecpsi^\prime ]_{W^{\sigma,2p}} ),
\\
	\left\| \frac{ \scrS_4(\vecf)[\vecphi,\vecpsi] }
	{ \| \Delta \vecf \|_{\R^n}^\alpha } \right\|_{L^p}
	&\leq
	C [\vecphi^\prime]_{W^{\sigma,2p}} [\vecpsi^\prime]_{W^{\sigma,2p}},
\\
	\left\| \frac{ \scrS_5(\vecf)[\vecphi,\vecpsi] }
	{ \| \Delta \vecf \|_{\R^n}^\alpha } \right\|_{L^p}
	&\leq
	C ( [\vecphi^\prime]_{W^{\sigma,2p}} + \| \vecphi^\prime \|_{L^\infty} )
	( [\vecpsi^\prime]_{W^{\sigma,2p}} + \| \vecpsi^\prime \|_{L^\infty} )
\end{align*}
by Lemmas \ref{estimates N} and \ref{estimates inner product}.
Hence,
from these estimates and Lemma \ref{estimates deltaDelta},
we obtain
\begin{align*}
	\left\|
	\frac{ \delta \scrN(\vectau)[\vecphi] }{ \| \Delta \vecf \|_{\R^n}^\alpha }
	\right\|_{L^p}
	&\leq
	C \| \vecphi^\prime \|_{W^{\sigma,2p} \cap L^\infty}
\\
	\left\|
	\frac{ \delta \scrN(\vectau)[\vecphi,\vecpsi] }{ \| \Delta \vecf \|_{\R^n}^\alpha }
	\right\|_{L^p}
	&\leq
	C \| \vecphi^\prime \|_{W^{\sigma,2p} \cap L^\infty}
	\| \vecpsi^\prime \|_{W^{\sigma,2p} \cap L^\infty},
\end{align*}
and
$L^1$-estimates for
$\scrG_{(\alpha,p)}(\vecf)$
and
$\scrH_{(\alpha,p)}(\vecf)$.

Next,
we consider
$L^\infty$-estimates of
$\scrM_{(\alpha,p)}(\vecf)$,
$\scrG_{(\alpha,p)}(\vecf)$,
and
$\scrH_{(\alpha,p)}(\vecf)$
with the weight
$|\Delta s|^{\alpha-2\beta}$.
We assume that
$\vecf$,
$\vecphi$,
$\vecpsi \in C^{1,\beta}$
and
$s_1$,
$s_2 \in \R/\calL\Z$
satisfy
$\displaystyle 0<|\Delta s|<\calL/2$.
Because we have
\[
	1-x^\alpha \leq \left( \frac \alpha 2 +1 \right) (1-x^2)
\]
for all
$x \in [0,1]$,
it follows that
\begin{align}
	|\Delta s|^{\alpha-2\beta} \scrM_\alpha(\vecf)
	&\leq
	C \frac 1 { |\Delta s|^{2\beta} }
	\left( 1-\frac{ \| \Delta \vecf \|_{\R^n}^\alpha }{ |\Delta s|^\alpha } \right)
\nonumber\\
	&\leq
	C \frac 1 { |\Delta s|^{2\beta} }
	\left( 1-\frac{ \| \Delta \vecf \|_{\R^n}^2 }{ |\Delta s|^2 } \right)
\nonumber\\
	&\leq
	C \frac 1 { |\Delta s|^{2\beta+2} }
	\int_{s_2}^{s_1} \int_{s_2}^{s_1} \| \Delta_4^3 \vectau \|_{\R^n}^2 ds_3ds_4
\nonumber\\
	&\leq
	C.
\label{estimate D Malpha}
\end{align}
Moreover,
by Lemmas \ref{estimates N}--\ref{estimates inner product},
we have
\begin{align*}
	\left\| |\Delta s|^{\alpha-2\beta} \frac{ \scrR_1(\vecf)[\vecphi] }{ \| \Delta \vecf \|_{\R^n}^\alpha } \right\|_{L^\infty}
	&\leq
	C \| \vecphi^\prime \|_{L^\infty},
\\
	\left\| |\Delta s|^{\alpha-2\beta} \frac{ \scrR_2(\vecf)[\vecphi] }{ \| \Delta \vecf \|_{\R^n}^\alpha } \right\|_{L^\infty}
	&\leq
	C [\vecphi^\prime]_{C^{0,\beta}},
\\
	\left\| |\Delta s|^{\alpha-2\beta} \frac{ \scrS_1(\vecf)[\vecphi,\vecpsi] }
	{ \| \Delta \vecf \|_{\R^n}^\alpha } \right\|_{L^\infty}
	&\leq
	C \| \vecphi^\prime \|_{L^\infty} \| \vecpsi^\prime \|_{L^\infty},
\\
	\left\| |\Delta s|^{\alpha-2\beta} \frac{ \scrS_2(\vecf)[\vecphi,\vecpsi] }
	{ \| \Delta \vecf \|_{\R^n}^\alpha } \right\|_{L^\infty}
	&\leq
	C \{
	\| \vecphi^\prime \|_{L^\infty}
	( \| \vecpsi^\prime \|_{L^\infty} + [\vecpsi^\prime]_{C^{0,\beta}} )
	+
	\| \vecpsi^\prime \|_{L^\infty}
	( \| \vecphi^\prime \|_{L^\infty} + [\vecphi^\prime]_{C^{0,\beta}} )
	\},
\\
	\left\| |\Delta s|^{\alpha-2\beta} \frac{ \scrS_3(\vecf)[\vecphi,\vecpsi] }
	{ \| \Delta \vecf \|_{\R^n}^\alpha } \right\|_{L^\infty}
	&\leq
	C ( \| \vecpsi^\prime \|_{L^\infty} [\vecphi^\prime]_{C^{0,\beta}}
	+
	\| \vecphi^\prime \|_{L^\infty} [\vecpsi^\prime]_{C^{0,\beta}} ),
\\
	\left\| |\Delta s|^{\alpha-2\beta} \frac{ \scrS_4(\vecf)[\vecphi,\vecpsi] }
	{ \| \Delta \vecf \|_{\R^n}^\alpha } \right\|_{L^\infty}
	&\leq
	C [\vecphi^\prime]_{C^{0,\beta}} [\vecpsi^\prime]_{C^{0,\beta}},
\\
	\left\| |\Delta s|^{\alpha-2\beta} \frac{ \scrS_5(\vecf)[\vecphi,\vecpsi] }
	{ \| \Delta \vecf \|_{\R^n}^\alpha } \right\|_{L^\infty}
	&\leq
	C ( [\vecphi^\prime]_{C^{0,\beta}} + \| \vecphi^\prime \|_{L^\infty} )
	( [\vecpsi^\prime]_{C^{0,\beta}} + \| \vecpsi^\prime \|_{L^\infty} ).
\end{align*}
Therefore,
we obtain
\begin{align*}
	\left\|
	|\Delta s|^{\alpha-2\beta}
	\frac{ \delta \scrN(\vectau)[\vecphi] }{ \| \Delta \vecf \|_{\R^n}^\alpha }
	\right\|_{L^\infty}
	&\leq
	C \| \vecphi^\prime \|_{C^{0,\beta}}
\\
	\left\|
	|\Delta s|^{\alpha-2\beta}
	\frac{ \delta^2 \scrN(\vectau)[\vecphi,\vecpsi] }{ \| \Delta \vecf \|_{\R^n}^\alpha }
	\right\|_{L^\infty}
	&\leq
	C \| \vecphi^\prime \|_{C^{0,\beta}}
	\| \vecpsi^\prime \|_{C^{0,\beta}},
\end{align*}
and the
$L^\infty$-estimates for
$\scrM_{(\alpha,p)}(\vecf)$,
$\scrG_{(\alpha,p)}(\vecf)$,
and
$\scrH_{(\alpha,p)}(\vecf)$
with the weight
$|\Delta s|^{\alpha-2\beta}$.

Lastly,
we consider the continuity.
Because continuity on the outside of the diagonal set is clear,
we consider the property on the diagonal set.
First,
we assume
$\vecf$,
$\vecphi$,
$\vecpsi \in h^{1,\beta}$
with
$0 < \beta < 1$,
and
$\vecf$
is bi-Lipschitz.
Let
$R>0$
be sufficiently small.
In a similar manner to the proof of \eqref{estimate D Malpha},
it holds that
\[
	|\Delta s|^{\alpha-2\beta} \scrM_\alpha(\vecf)
	\leq
	\tilde{C} [\vectau]_{C^{0,\beta},R}
\]
for
$0<|\Delta s|<R$,
and therefore
\[
	\lim_{(s_1,s_2) \to (s,s)}
	|\Delta s|^{\alpha-2\beta} \scrM_\alpha(\vecf)
	= 0,
\]
where
$\tilde{C}$
is a positive constant depending only on
$\alpha$,
$p$,
and
$C_{\text b}$.
Moreover,
by Lemma \ref{estimates N},
we have
\begin{align*}
	\lim_{(s_1,s_2) \to (s,s)}
	\left|
	|\Delta s|^{\alpha-2\beta}
	\frac{ \scrR_i(\vecf)[\vecphi] }{ \| \Delta \vecf \|_{\R^n}^\alpha }
	\right|
	&=
	0,
	\quad
	(i=1,2),
\\
	\lim_{(s_1,s_2) \to (s,s)}
	\left|
	|\Delta s|^{\alpha-2\beta}
	\frac{ \scrS_i(\vecf)[\vecphi,\vecpsi] }{ \| \Delta \vecf \|_{\R^n}^\alpha }
	\right|
	&=
	0,
	\quad
	(i=1,\ldots,5).
\end{align*}
Considering Lemma \ref{estimates deltaDelta},
we obtain the desired continuity when
$\vecf$,
$\vecphi$,
$\vecpsi \in h^{1,\beta}$
with
$0<\beta<1$.

Next,
we consider the case where
$\beta=1$.
We denote the curvature vector of
$\vecf$
by
$\veckappa$,
i.e.\ $\veckappa = \vecf^{\prime\prime}$.
Using Lemma \ref{estimates N} and L'Hospital's theorem,
we have
\begin{align*}
	|\Delta s|^{\alpha-2} \scrM_\alpha(\vecf)
	&=
	|\Delta s|^{\alpha-2} \frac{ \vphi_\alpha(\scrN(\vectau)) }{ \| \Delta \vecf \|_{\R^n}^\alpha }
\\
	&=
	|\Delta s|^{\alpha-2} \frac{ \vphi_2(\scrN(\vectau)) }{ \| \Delta \vecf \|_{\R^n}^\alpha }
	\frac{ \vphi_\alpha(\scrN(\vectau)) }{ \vphi_2(\scrN(\vectau)) }
\\
	&=
	|\Delta s|^{\alpha-2} \frac{ \scrN(\vectau) }{ \| \Delta \vecf \|_{\R^n}^\alpha }
	\frac 1 { 1+\scrN(\vectau) }
	\frac{ \vphi_\alpha(\scrN(\vectau)) }{ \vphi_2(\scrN(\vectau)) }
\\
	&\to
	\frac \alpha {24} \| \veckappa(s) \|_{\R^n}^2
	\quad
	\text{as}\ (s_1,s_2) \to (s,s).
\end{align*}
Similarly,
it follows that
\begin{align*}
	|\Delta s|^{\alpha-2}
	\frac{ \scrR_1(\vecf)[\vecphi] }{ \| \Delta \vecf \|_{\R^n}^\alpha }
	&\to
	-\frac 1 6 \vectau(s) \cdot \vecphi^\prime(s) \| \veckappa(s) \|_{\R^n}^2,
\\
	|\Delta s|^{\alpha-2}
	\frac{ \scrR_2(\vecf)[\vecphi] }{ \| \Delta \vecf \|_{\R^n}^\alpha }
	&\to
	\frac 1 6 \veckappa(s) \cdot \vecphi^{\prime\prime}(s),
\\
	|\Delta s|^{\alpha-2}
	\frac{ \scrS_1(\vecf)[\vecphi,\vecpsi] }{ \| \Delta \vecf \|_{\R^n}^\alpha }
	&\to
	-\frac 1 6
	\{
	\vecphi^\prime(s) \cdot \vecpsi^\prime(s)
	-
	(\vectau(s) \cdot \vecphi^\prime(s)) (\vectau(s) \cdot \vecpsi^\prime(s))
	\}
	\| \veckappa(s) \|_{\R^n}^2,
\\
	|\Delta s|^{\alpha-2}
	\frac{ \scrS_2(\vecf)[\vecphi,\vecpsi] }{ \| \Delta \vecf \|_{\R^n}^\alpha }
	&\to
	\frac 1 6 ( \vectau(s) \cdot \vecphi^\prime(s) )
	( \veckappa(s) \cdot \vecpsi^{\prime\prime}(s)
	- \vectau(s) \cdot \vecpsi^\prime(s) \| \veckappa(s) \|_{\R^n}^2)
\\
	&\quad
	+
	\frac 1 6 ( \vectau(s) \cdot \vecpsi^\prime(s) )
	( \veckappa(s) \cdot \vecphi^{\prime\prime}(s)
	- \vectau(s) \cdot \vecphi^\prime(s) \| \veckappa(s) \|_{\R^n}^2),
\\
	|\Delta s|^{\alpha-2}
	\frac{ \scrS_3(\vecf)[\vecphi,\vecpsi] }{ \| \Delta \vecf \|_{\R^n}^\alpha }
	&\to
	- \frac 1 6
	(\vectau(s) \cdot \vecpsi^\prime(s)) (\veckappa(s) \cdot \vecphi^{\prime\prime}(s))
	- \frac 1 6
	(\vectau(s) \cdot \vecphi^\prime(s)) (\veckappa(s) \cdot \vecpsi^{\prime\prime}(s)),
\\
	|\Delta s|^{\alpha-2}
	\frac{ \scrS_4(\vecf)[\vecphi,\vecpsi] }{ \| \Delta \vecf \|_{\R^n}^\alpha }
	&\to
	\frac 1 6 \vecphi^{\prime\prime}(s) \cdot \vecpsi^{\prime\prime}(s),
\\
	|\Delta s|^{\alpha-2}
	\frac{ \scrS_5(\vecf)[\vecphi,\vecpsi] }{ \| \Delta \vecf \|_{\R^n}^\alpha }
	&\to
	- \frac 1 6
	\{ (\vectau(s) \cdot \vecphi^{\prime\prime}(s)) + (\vecphi^\prime(s) \cdot \veckappa(s)) \}
	\{ (\vectau(s) \cdot \vecpsi^{\prime\prime}(s)) + (\vecpsi^\prime(s) \cdot \veckappa(s)) \}
\end{align*}
as
$(s_1,s_2) \to (s,s)$.
Hence,
the desired continuity holds when
$\vecf$,
$\vecphi$,
$\vecpsi \in C^2$.

This completes
the proof of Theorem \ref{mainthm}.
\hfill
$\square$

%
%


\begin{thebibliography}{99}
\bibitem{ACFGH03}
	A.\ Abrams, J.\ Cantarella, J.\ H.\ G.\ Fu, M.\ Ghomi, and R.\ Howard, \textit{Circles minimize most knot energies}, Topology, \textbf{42} (2) (2003), 381--394.
\bibitem{B12-2}
	S.\ Blatt, \textit{The gradient flow of the M\"{o}bius energy near local minimizers}, Calc.\ Var.\ Partial Differential Equations, \textbf{43} (1) (2012), 1250010, 9 pp.
\bibitem{B12}
	S.\ Blatt, \textit{Boundedness and regularizing effects of O'Hara's knot energies}, J.\ Knot Theory Ramifications, \textbf{21} (2012), 1250010, 9pp.
\bibitem{B18}
	S.\ Blatt, \textit{The gradient flow of O'Hara's knot energies}, Math.\ Ann., \textbf{370} (3--4) (2018), 993--1061.
\bibitem{BRS16}
	S.\ Blatt, P.\ Reiter, and A.\ Schikorra, \textit{Harmonic analysis meets critical knots. Critical points of the M\"{o}bius energy are smooth}, Trans.\ Amer.\ Math.\ Soc.\ \textbf{368} (9) (2016), 6391--6438.
\bibitem{BR13}
	S.\ Blatt and P.\ Reiter, \textit{Stationary points of O'Hara's knot energies}, Manuscripta Math., \textbf{140} (1--2) (2013), 29--50.
\bibitem{BV19}
	S.\ Blatt and N.\ Vorderobermeier, \textit{On the analyticity of critical points of the M\"{o}bius energy}, Calc.\ Var.\ Partial Differential Equations, \textbf{58} (1) (2019), 28 pp.
\bibitem{FHW94}
	M.\ H.\ Freedman, Z.-X.\ He, and Z.\ Wang, \textit{M\"{o}bius energy of knots and unknots}, Ann.\ of Math., \textbf{139} (1994), 1--50.
\bibitem{He00}
	Z.-X.\ He, \textit{The Euler-Lagrange equation and heat flow for the M\"{o}bius energy}, Comm.\ Pure Appl.\ Math., \textbf{53} (4) (2000), 399--431. 
\bibitem{IN14}
	A.\ Ishizeki and T.\ Nagasawa, \textit{A decomposition theorem of the M\"{o}bius energy I: Decomposition and M\"{o}bius invariance}, Kodai.\ Math.\ J.,\ \textbf{37} (3) (2014), 737--754.
\bibitem{IN15}
	A.\ Ishizeki and T.\ Nagasawa, \textit{A decomposition theorem of the M\"{o}bius energy I\!I: Variational formulae and estimates}, Math.\ Ann., \textbf{363} (1--2) (2015), 617--635.
\bibitem{INpre}
	A.\ Ishizeki and T.\ Nagasawa, \textit{Decomposition of generalized O'Hara's energies}, preprint, arXiv:1904.06812 (2019).
\bibitem{O91}
	J.\ O'Hara, \textit{Energy of a knot}, Topology, \textbf{30} (2) (1991), 241--247.
\bibitem{O92}
	J.\ O'Hara, \textit{Family of energy functionals of knots}, Topology Appl.,\ \textbf{48} (2) (1992), 147--161.
\bibitem{O94}
	J.\ O'Hara, \textit{Energy functionals of knots I\!I}, Topology Appl.,\ \textbf{56} (1) (1994), 45--61.
\bibitem{Rei12}
	P.\ Reiter, \textit{Repulsive knot energies and pseudodifferential calculus for O'Hara's knot energy family $E^{(\alpha)}$, $\alpha \in [2,3)$}, Math.\ Nachr.,\ \textbf{285} (7) (2012), 889--913.
\end{thebibliography}
\end{document}